\numberwithin{equation}{section}
\newtheorem{defi}{Definition}[section]
\newtheorem{thm}[defi]{Theorem}
\newtheorem{lemm}[defi]{Lemma}
\newtheorem{remark}[defi]{Remark}
\newtheorem{assum}[defi]{Assumption}
\newtheorem{prop}[defi]{Proposition}
\newtheorem{prob}[defi]{Problem}
\begin{document}

	\title[ Robust optimal consumption, investment and reinsurance]{Robust optimal consumption, investment and reinsurance for recursive preferences}
	\author{Elizabeth Dadzie}
	\address{Elizabeth Dadzie: Department of Mathematics, University of Ghana, Accra, LG 25, Ghana; African Institute for Mathematical Sciences, Accra, LG DTD 20046, Ghana}
	\email{edadzie@aims.edu.gh}
	
	\author{Wilfried Kuissi-Kamdem}
	\address{Wilfried Kuissi-Kamdem: Department of Mathematics, University of Rwanda, Kigali, 4285, Rwanda; African Institute for Mathematical Sciences, Accra, LG DTD 20046, Ghana; Department of Mathematical Stochastics, University of Freiburg, Freiburg, 79104, Germany}
	\email{donatien@aims.edu.gh, wilfried.kuissi.kamdem@stochastik.uni-freiburg.de}
	
	\author{Marcel Ndengo}
	\address{Marcel Ndengo: Department of Mathematics, University of Rwanda, Kigali, 4285, Rwanda}
	\email{serandengo@gmail.com}
	\thanks{This work was supported by a grant from the African Institute for Mathematical Sciences, with financial support from the Government of Canada, provided through Global Affairs Canada, and the International Development Research Centre.}
	
	\date{}

	\keywords{Consumption-investment-reinsurance strategies, Epstein-Zin recursive utility, Model uncertainty, Forward-backward stochastic differential equations.}
	
	\subjclass[2020]{Primary 91B05, 91G05, 91G10; Secondary 91G80s} 
	\maketitle
	
	\begin{abstract}
		This paper investigates a robust optimal consumption, investment, and reinsurance problem for an insurer with Epstein-Zin recursive preferences operating under model uncertainty. The insurer's surplus follows the diffusion approximation of the Cramér-Lundberg model, and the insurer can purchase proportional reinsurance. Model ambiguity is characterised by a class of equivalent probability measures, and the insurer, being ambiguity-averse, aims to maximise utility under the worst-case scenario. By solving the associated coupled forward-backward stochastic differential equation (FBSDE), we derive closed-form solutions for the optimal strategies and the value function. Our analysis reveals how ambiguity aversion, risk aversion, and the elasticity of intertemporal substitution (EIS) influence the optimal policies. Numerical experiments illustrate the effects of key parameters, showing that optimal consumption decreases with higher risk aversion and EIS, while investment and reinsurance strategies are co-dependent on both financial and insurance market parameters, even without correlation. This study provides a comprehensive framework for insurers to manage capital allocation and risk transfer under deep uncertainty.
	\end{abstract}


\section{Introduction}\label{Introduction}
The optimal management of an insurance's wealth requires balancing between different sources of risk and return: the allocation of funds in the financial market via investment decisions and the transfer of insurance (underwriting) risk through reinsurance. Classical financial economics and actuarial research has studied these problems extensively under expected utility theory; see \cite{schmidli2007stochastic} and reference therein. In this formulation, the insurer (or investor) maximises classical \textit{time-additive} utilities of terminal wealth.

However, from an economics point of view, the main unattractive feature of time-additive preferences is the fact that they fail to separate investors' desire to smooth consumption across states of nature (measured by the coefficient of risk aversion) and investors' willingness to smooth consumption over time (measured by the coefficient of elasticity of intertemporal substitution EIS); see \cite[on pp.227-228]{xing2017consumption} for more details. This limitation has led to a considerable amount of current theoretical and empirical research in finance and economics based on more general dynamic risk preferences.

One of the most popular response in the literature are recursive preferences. Such preferences allow to disentangle the link between risk aversion and EIS; thanks to the postulate that current consumption depend on the value of future consumption. Arguably the most popular among recursive utilities is the Epstein-Zin utility as proposed in \cite{epstein1989substitution}. Since then the Epstein-Zin utility has been widely used in a variety of different contexts covering asset pricing, decision theory, business cycles and	growth, and monetary economics. However, despite the established and rapid growing literature on consumption and investment problems with recursive utilities, to the best of our knowledge no research has ever solved such problems when reinsurance is taken into account.

There is by now ample evidence in the literature that both insurers and investors operate under model uncertainty: the true drift or volatility of asset returns, and the intensity or severity of claims, may not be known with certainty; see \cite{chen2020robust} for a review. In the presence of such ambiguity, a robust decision maker evaluates outcomes under a set of plausible probability measures and maximises utility against the worst-case scenario. Robust control theory (see \cite{hansen2001robust,maenhout2004robust}) integrates this feature by introducing an additional minimisation over alternative measures, penalised by a relative-entropy term. Combining Epstein–Zin utilities with robustness yields robust recursive preferences, which capture both the investors’ intertemporal trade-offs and their concern for model misspecification. For insurers, this provides a realistic framework for studying capital allocation, reinsurance design, and consumption smoothing under deep uncertainty.

In the present paper, we incorporate ambiguity aversion to study the optimal robust consumption (``dividend", ``refund",...), investment and reinsurance problem through maximising, over a finite time-horizon, the Epstein-Zin recursive utility.  A further improvement arises from the fact that we consider an insurer subject to a liability at the end of the investment period. We obtain closed-form solutions for the robust optimal consumption, investment-reinsurance strategy and the corresponding value function by adopting an extension of a well-known technique proposed by \cite{hu2005utility} (for time-additive utility) and \cite{xing2017consumption} (for Epstein-Zin utility). This extension has been introduced in \cite{kk2025optimal} to study a consumption-investment optimisation problem with liability and Epstein-Zin utility under partial information. In order to analyse the effect of ambiguity and the utility's parameters (risk aversion coefficient and EIS coefficient) on the optimal	strategy, we consider three special cases, i.e., uncorrelated claims, without ambiguity, and with ambiguity. Finally, we perform some numerical experiments to illustrate the robust optimal consumption, investment-reinsurance strategy.

The remainder of the present paper is structured as follows. Section \ref{Setting} introduces the financial–insurance market model and the insurer’s wealth dynamics under proportional reinsurance. In Section \ref{Ambiguity-averse framework} we formulate the robust stochastic optimisation problem. In Section \ref{Solution to the robust optimisation problem} we give the main results of this paper. In Section \ref{Numerical illustrations} we perform some numerical analysis. Finally, Section \ref{Conclusion} summarises this paper.

\section{Model and problem formulation}\label{Problem formulation}
\subsection{Probability setting and wealth process of the insurer}\label{Setting}
We consider a filtered probability space $(\Omega,\mathcal{F},(\mathcal{F}_{t})_{0\leq t\leq T},\mathbb{P})$ generated by a $2$-dimensional Brownian motion $B=(W,W^{re})$. The filtration $(\mathcal{F}_{t})_{0\leq t\leq T}$ is assumed to satisfy the usual conditions of completeness and right-continuity, so that we can take c{\`a}dl{\`a}g versions for semi-martingales. We define some known spaces of stochastic processes.
\begin{itemize}
	\item[$(i)$] Let $\mathcal{C}$ be the set of non-negative progressively measurable processes on $[0,T]\times\Omega$.
	\item[$(ii)$] Let $\mathcal{H}_{\mathbb{P}}^{q},~q\ge1$, denotes the space of progressively measurable $\mathbb{R}$-valued processes $(Y_{t})_{0\le t\le T}$ such that $\|Y\|_{\mathcal{H}_{\mathbb{P}}^{q}}=\mathbb{E}[\int_{0}^{T}|Y_{t}|^{q}\mathrm{d}t]^{1/q}<\infty$.
	\item[$(iii)$] Let $\Xi_{\mathbb{P}}^{q},~q\ge1$, denotes the space of predictable $\mathbb{R}^{2}$-valued processes\\ $(Z_{t})_{0\le t\le T}$ such that $\|Z\|_{\Xi_{\mathbb{P}}^{q}}=\mathbb{E}[\exp\big(\frac{q}{2}\int_{0}^{T}\|Z_{t}\|^{2}\mathrm{d}t\big)]^{1/q}<\infty$.
	\item[$(iv)$] Let $\mathbb{H}_{\mathbb{P}}^{q},~q\ge1$, denotes the space of predictable $\mathbb{R}^{2}$-valued processes\\ $(Z_{t})_{0\le t\le T}$ such that $\|Z\|_{\mathbb{H}_{\mathbb{P}}^{q}}=\mathbb{E}[(\int_{0}^{T}\|Z_{t}\|^{2}\mathrm{d}t)^{\frac{q}{2}}]^{1/q}<\infty$.
\end{itemize}

Note that similar spaces can be defined under another probability measure $\mathbb{Q}$, by replacing $\mathbb{P}$ with $\mathbb{Q}$ in the subscripts of the corresponding spaces, and taking expectations with respect to $\mathbb{Q}$.

Now, we can introduce the wealth process, under $\mathbb{P}$, of an insurer. We consider a dynamic financial-insurance environment with two traded assets and the surplus process of the insurer. The traded assets consist of one riskless bond $S^{0}$ and one risky asset $S$ with dynamics
\begin{align}\label{Tradable assets}
	\begin{cases}
		\mathrm{d}S_{t}^{0}&=rS_{t}^{0}\mathrm{d}t,~S_{0}^{0}>0,\\
		\mathrm{d}S_{t}&=S_{t}\left((r+\mu)\mathrm{d}t+\sigma\mathrm{d}W_{t}\right),~S_{0}>0.
	\end{cases}
\end{align}

We assume that, without reinsurance, the surplus process $\widehat{U}$ of the insurer satisfies the diffusion approximation of the classical Cram{\'e}r-Lundberg model (see, e.g., \cite[Sect.~IV.8]{asmussen2020risk} or \cite{ma2023optimal})
\begin{align}
	\mathrm{d}\widehat{U}_{t}&=\kappa\zeta\mathrm{d}t-\sqrt{\kappa\beta}\big(\rho^{S}\mathrm{d}W_{t}+\rho^{re}\mathrm{d}W_{t}^{re}\big)
\end{align}
where $\rho^{S},\rho^{re}\in[-1,1]$ are the correlation coefficients such that $\rho^{re}\ne0$ and $(\rho^{S})^{2}+(\rho^{re})^{2}=1$, $\kappa\zeta$ is the claim rate at $t\in[0,T]$, and $\zeta,\kappa,\beta>0$. The insurance company participates in the reinsurance market and buys proportional reinsurance $\pi_{t}^{re}$ at every time $t\in[0,T]$. As in \cite{bauerle2005benchmark}, the reinsurance strategy $\pi_{t}^{re}$ is allowed to be greater than $1$; expressing the situation in which the insurance company also acts as reinsurer of other insurance companies. At any time $t$, the insurance company retains $100\pi_{t}^{re}\%$ of the total claims while the reinsurer undertakes the rest $100(1-\pi_{t}^{re})\%$. Using expected value principle the insurer and the reinsurer premium rates are determined by $(1+\nu^{in})\kappa\zeta$ and $(1+\nu^{re})\kappa\zeta$, respectively, where $\nu^{in}$ is the safety loading of the insurer and $\nu^{re}$ the safety loading of the reinsurer. We exclude the insurer’s arbitrage opportunity by assuming $\nu^{re}>\nu^{in}$. Hence, the modified dynamics of the insurer's surplus is given by
\begin{align}\label{Surplus process}
	\mathrm{d}U_{t}&=\big((1+\nu^{in})\kappa\zeta-(1-\pi_{t}^{re})(1+\nu^{re})\kappa\zeta\big)-\pi_{t}^{re}\mathrm{d}\widehat{U}_{t}\nonumber\\
	&=\big(\nu^{in}-\nu^{re}+\pi_{t}^{re}\nu^{re}\big)\kappa\zeta\mathrm{d}t+\pi_{t}^{re}\sqrt{\kappa\beta}\big(\rho^{S}\mathrm{d}W_{t}+\rho^{re}\mathrm{d}W_{t}^{re}\big).
\end{align}

In addition to choosing an amount of reinsurance $\pi_{t}^{re}$, $t\in[0,T]$, the insurer also chooses her consumption rate $c_{t}$ (in the form of ``dividend", ``refund",...) and an amount to be invested in the risky assets (investment strategy) $\pi_{t}^{S}$. For such $(c,\pi^{S},\pi^{re})$, the wealth process $\widetilde{X}$ of the company with initial endowment $x\ge0$ at time $0$ evolves according to the stochastic differential equation (SDE)
\begin{align}\label{Wealth_Ambiguity-neutral0}
	\mathrm{d}\widetilde{X}_{t}&=r\widetilde{X}_{t}\mathrm{d}t+\pi_{t}^{S}\mu\mathrm{d}t+\pi_{t}^{S}\mathrm{d}W_{t}-c_{t}\mathrm{d}t+\mathrm{d}U_{t}\nonumber\\
	&=r\widetilde{X}_{t}\mathrm{d}t+\Big(\pi_{t}^{S}\mu+\pi_{t}^{re}\nu^{re}\kappa\zeta\Big)\mathrm{d}t+\big(\nu^{in}-\nu^{re}\big)\kappa\zeta\mathrm{d}t+\pi_{t}^{S}\sigma\mathrm{d}W_{t}\notag\\
	&\phantom{X}+\pi_{t}^{re}\sqrt{\kappa\beta}\big(\rho^{S}\mathrm{d}W_{t}+\rho^{re}\mathrm{d}W_{t}^{re}\big)\notag\\
	&=r\widetilde{X}_{t}\mathrm{d}t+\pi_{t}^{\intercal}\eta\mathrm{d}t+\pi_{t}^{\intercal}\mathrm{d}B_{t}+\big(\nu^{in}-\nu^{re}\big)\kappa\zeta\mathrm{d}t-c_{t}\mathrm{d}t,
\end{align}
where $\Sigma:=\left( \begin{matrix}\sigma & 0 \\ \rho^{S} & \sqrt{\kappa\beta}\rho^{re}\end{matrix} \right)$, $\eta:=\Sigma^{-1}\left( \begin{matrix}\mu \\ \nu^{re}\kappa\zeta\end{matrix} \right)$ and $\pi_{t}^{\intercal}:=\big(\pi_{t}^{S},\pi_{t}^{re}\big)\Sigma,~0\le t\le T$.

As in \cite{ma2023optimal}, 
instead of working with the wealth process $(\widetilde{X}_{t})_{0\le t\le T}$ itself, we consider its self-financing form process given by
\begin{align}\label{Self-financing wealth}
	X_{t}:=\widetilde{X}_{t}+\big(\nu^{in}-\nu^{re}\big)\kappa\zeta\int_{t}^{T}e^{-r(s-t)}\mathrm{d}s~\text{ for ~}t\in[0,T].
\end{align}
Clearly, $X_{T}=\widetilde{X}_{T}$. Hence, Equation~\eqref{Wealth_Ambiguity-neutral0} transforms to
\begin{align}\label{Wealth_Ambiguity-neutral}
	\mathrm{d}X_{t}&=\mathrm{d}\widetilde{X}_{t}+r\big(\nu^{in}-\nu^{re}\big)\kappa\zeta\int_{t}^{T}e^{-r(s-t)}\mathrm{d}s-\big(\nu^{in}-\nu^{re}\big)\kappa\zeta\mathrm{d}t\notag\\
	&=r\widetilde{X}_{t}\mathrm{d}t+\pi_{t}^{\intercal}\eta\mathrm{d}t+\pi_{t}^{\intercal}\mathrm{d}B_{t}-c_{t}\mathrm{d}t+r\big(\nu^{in}-\nu^{re}\big)\kappa\zeta\int_{t}^{T}e^{-r(s-t)}\mathrm{d}s\notag\\
	&=rX_{t}\mathrm{d}t+\pi_{t}^{\intercal}\eta\mathrm{d}t+\pi_{t}^{\intercal}\mathrm{d}B_{t}-c_{t}\mathrm{d}t,
\end{align}
with $X_{0}=x+\big(\nu^{in}-\nu^{re}\big)\kappa\zeta\int_{0}^{T}e^{-rs}\mathrm{d}s$.

\subsection{The consumption, investment and reinsurance problem for an ambiguity-averse insurer}\label{Ambiguity-averse framework}
The framework given in Section \ref{Setting} concerned an insurer who has total confidence in model~\eqref{Wealth_Ambiguity-neutral} under the probability measure $\mathbb{P}$. However, in practice insurers are concerned about model misspecification generated by the deviation from the reference probability measure $\mathbb{P}$. We shall then integrate the probability distribution uncertainty into the consumption-investment-reinsurance optimisation problem of an ambiguity-averse insurer (AAI). To define alternative models, we consider other probability measures---equivalent to the reference measure $\mathbb{P}$---defined, via Radon-Nykodim derivative, by
\begin{align}\label{Alternative probability measure}
	\frac{\mathrm{d}\mathbb{Q}^{\xi}}{\mathrm{d}\mathbb{P}}\Big|_{\mathcal{F}_{T}}:=\exp\Big(-\frac{1}{2}\int_{0}^{T}\|\xi_{s}\|^{2}\mathrm{d}s-\int_{0}^{T}\xi_{s}^{\intercal}\mathrm{d}B_{s}\Big),
\end{align}
where $\xi:=(\xi^{S},\xi^{re})^{\intercal}\in\Xi_{\mathbb{P}}^{2}$ is called the distortion process. According to Girsanov's theorem, we can define on the probability measure $\mathbb{Q}^{\xi}$ the following Brownian motions:
\begin{align}
	&W_{t}^{\mathbb{Q}^{\xi}}:=W_{t}+\int_{0}^{t}\xi_{s}^{S}\mathrm{d}s~\text{ and }~W_{t}^{re,\mathbb{Q}^{\xi}}:=W_{t}^{re}+\int_{0}^{t}\xi_{s}^{re}\mathrm{d}s,
\end{align}
or, equivalently, $B_{t}^{\mathbb{Q}^{\xi}}:=B_{t}+\int_{0}^{t}\xi_{s}\mathrm{d}s$ for $t\in[0,T]$. 

Under $\mathbb{Q}^{\xi}$, the dynamics of the wealth process $X$ in \eqref{Wealth_Ambiguity-neutral} becomes
\begin{align}\label{Wealth_Ambiguity-averse}
	\mathrm{d}X_{t}&=rX_{t}\mathrm{d}t+\pi_{t}^{\intercal}\eta\mathrm{d}t+\pi_{t}^{\intercal}\mathrm{d}B_{t}^{\mathbb{Q}^{\xi}}-c_{t}\mathrm{d}t-\pi_{t}^{\intercal}\xi_{t}\mathrm{d}t.
\end{align}

An AAI's preference over $\mathcal{C}$-valued consumption and $\Xi_{\mathbb{P}}^{2}$-valued distortion  is given by a robust version of the classical continuous-time stochastic differential utility of Epstein-Zin type. To describe this preference, let $\delta>0$ represent the discounting rate, $0<\gamma\neq1$ be the relative risk aversion, and $0<\psi\neq1$ be the elasticity of intertemporal substitution coefficient (EIS). Then, the Epstein–Zin aggregator is defined by
\begin{align}\label{Epstein-Zin generator}
	f(c,v)&:=\delta e^{-\delta t} \frac{c^{1-\frac{1}{\psi}}}{1-\frac{1}{\psi}}((1-\gamma)v)^{1-\frac{1}{\theta}},\text{ with }~\theta:=\frac{1-\gamma}{1-\frac{1}{\psi}},
\end{align}
and the bequest utility function by $h(c):=e^{-\delta\theta T}\frac{c^{1-\gamma}}{1-\gamma}$. Hence, the robust Epstein-Zin utility over the consumption stream $c\in\mathcal{C}$ and the distortion process $\xi\in\Xi_{\mathbb{P}}^{2}$ on a finite time horizon $T$ is a process $(V_{t}^{c,\xi})_{t\in[0,T]}$ which satisfies
\begin{align}\label{Robust Epstein-Zin utility}
	V_{t}^{c,\xi}&=\mathbb{E}_{t}^{\mathbb{Q}^{\xi}}\Big[h(c_{T})+\int_{t}^{T}\Big(f(c_{s},V_{s}^{c,\xi})+\frac{1}{2\Psi_{s}}\|\xi_{s}\|^{2}\Big)\mathrm{d}s\Big]~\text{for }t\in[0,T],
\end{align}
where $(\Psi_{t})_{t\in[0,T]}$ is a non-negative process which captures the AAI's ambiguity aversion. Here, $\mathbb{E}_{t}^{\mathbb{Q}^{\xi}}[\cdot]$ stands for the conditional expectation $\mathbb{E}^{\mathbb{Q}^{\xi}}[\cdot|\mathcal{F}_{t}]$ under $\mathbb{Q}^{\xi}$. Following \cite{maenhout2004robust}, we adopt a homothetic robustness preference by defining $\Psi$ via
\begin{align}\label{Maenhout ambiguity function}
	\Psi_{t}:=\frac{\Phi}{(1-\gamma)V_{t}^{c,\xi}}~\text{ for }t\in[0,T],
\end{align}
with $\Phi\ge0$ denoting the ambiguity aversion parameter. Hence, the robust recursive utility process $V^{c,\xi}$ in \eqref{Robust Epstein-Zin utility} becomes 
\begin{align}\label{Epstein-Zin utility_Maenhout's style}
	V_{t}^{c,\xi}&=\mathbb{E}_{t}^{\mathbb{Q}^{\xi}}\Big[h(c_{T})+\int_{t}^{T}\Big(f(c_{s},V_{s}^{c,\xi})+\frac{1}{2\Phi}\|\xi_{s}\|^{2}(1-\gamma)V_{s}^{c,\xi}\Big)\mathrm{d}s\Big],~0\le t\le T.
\end{align}

For the analysis in our paper, we study the case
\begin{align}\label{Parameters specifications}
	\gamma>1~\text{ and }~\psi>1.
\end{align}
Our interest in the parameter specification in \eqref{Parameters specifications} originates mainly from its empirical evidence on consumption and portfolio decisions; see \cite[on p.228]{xing2017consumption}.

Without the distortion term in the generator of \eqref{Epstein-Zin utility_Maenhout's style} (no ambiguity), existence and uniqueness results are well-established (see \cite[Prop.~2.2]{xing2017consumption}). To guarantee the existence of a \textit{suitable} unique solution to \eqref{Epstein-Zin utility_Maenhout's style}, for non-zero distortion term, we consider the following set of admissible consumption and distortion streams.
\begin{align}\label{Admissible consumption and distortion processes}
	\mathcal{A}_{a}:=\Big\{(c,\xi)\in\mathcal{C}\times\Xi_{\mathbb{Q}^{\xi}}^{2}~\big|&~\mathbb{E}^{\mathbb{Q}^{\xi}}\Big[\int_{0}^{T}e^{-\delta s}c_{s}^{1-\frac{1}{\psi}}\mathrm{d}s\Big]<\infty~\text{ and }~ \mathbb{E}^{\mathbb{Q}^{\xi}}\big[e^{\int_{0}^{T}\frac{1}{2\Phi} \|\xi_{s}\|^{2}\mathrm{d}s}c_{T}^{1-\gamma}\big]<\infty\Big\}.
\end{align}
\begin{prop}\label{Non-empty control set}
	Suppose $\gamma,\psi>1$ and $(c,\xi)\in\mathcal{A}_{a}$. Then \eqref{Epstein-Zin utility_Maenhout's style} admits a unique solution $V^{c,\xi}$, with $V^{c,\xi}$ continuous, strictly negative and of class (D). Moreover, there exists a square integrable process $Z^{c,\xi}$ such that for $t\in[0,T]$,
	\begin{align}\label{Epstein-Zin utility_Maenhout's style_Integral form}
		V_{t}^{c,\xi}&=h(c_{T})+\int_{t}^{T}\Big(f(c_{s},V_{s}^{c,\xi})+\frac{1}{2\Phi}\|\xi_{s}\|^{2}(1-\gamma)V_{s}^{c,\xi}\Big)\mathrm{d}s-\int_{t}^{T}Z_{t}^{c,\xi}\mathrm{d}B_{s}^{\mathbb{Q}^{\xi}}.
	\end{align}
\end{prop}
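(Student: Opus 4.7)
The plan is to view \eqref{Epstein-Zin utility_Maenhout's style} as a BSDE on $(\Omega,\mathcal{F},\mathbb{Q}^{\xi})$ driven by the $\mathbb{Q}^{\xi}$-Brownian motion $B^{\mathbb{Q}^{\xi}}$, kill the linear-in-$V$ penalty $\tfrac{1-\gamma}{2\Phi}\|\xi\|^{2}V$ by an integrating-factor transformation, and reduce the resulting equation to a pure Epstein--Zin BSDE that is covered by \cite[Prop.~2.2]{xing2017consumption}. The conversion between the recursive form \eqref{Epstein-Zin utility_Maenhout's style} and the BSDE form \eqref{Epstein-Zin utility_Maenhout's style_Integral form} is then standard: any class (D) solution of \eqref{Epstein-Zin utility_Maenhout's style} makes $t\mapsto V^{c,\xi}_{t}+\int_{0}^{t}(f(c_{s},V^{c,\xi}_{s})+\tfrac{1-\gamma}{2\Phi}\|\xi_{s}\|^{2}V^{c,\xi}_{s})\mathrm{d}s$ a $\mathbb{Q}^{\xi}$-martingale, and the Brownian martingale representation theorem under $\mathbb{Q}^{\xi}$ delivers the predictable square-integrable $Z^{c,\xi}$; the converse direction is immediate from It\^o's formula. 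Hence it suffices to build a unique continuous, strictly negative, class (D) solution of the BSDE
\[
\mathrm{d}V_{t}=-\bigl(f(c_{t},V_{t})+\tfrac{1-\gamma}{2\Phi}\|\xi_{t}\|^{2}V_{t}\bigr)\mathrm{d}t+Z_{t}^{\intercal}\mathrm{d}B^{\mathbb{Q}^{\xi}}_{t},\qquad V_{T}=h(c_{T}).
\]

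The core step is the multiplicative change of variable. Set $\Lambda_{t}:=\exp\bigl(\int_{0}^{t}\tfrac{1-\gamma}{2\Phi}\|\xi_{s}\|^{2}\mathrm{d}s\bigr)\in(0,1]$ (nonincreasing because $\gamma>1$) and define $\widetilde{V}_{t}:=\Lambda_{t}V_{t}$, $\widetilde{Z}_{t}:=\Lambda_{t}Z_{t}$. It\^o's product rule together with $\mathrm{d}\Lambda_{t}=\tfrac{1-\gamma}{2\Phi}\|\xi_{t}\|^{2}\Lambda_{t}\mathrm{d}t$ causes the two penalty contributions to cancel, and the $\theta$-homogeneity of $v\mapsto f(c,v)$ through $((1-\gamma)v)^{1-1/\theta}$ shows that $\widetilde{V}$ satisfies a pure Epstein--Zin BSDE with terminal data $\Lambda_{T}h(c_{T})$ and aggregator $\widetilde{f}(t,\widetilde{v})=\delta e^{-\delta t}\tfrac{\widetilde{c}_{t}^{\,1-1/\psi}}{1-1/\psi}((1-\gamma)\widetilde{v})^{1-1/\theta}$, where the rescaled consumption reads $\widetilde{c}_{t}:=c_{t}\Lambda_{t}^{1/(1-\gamma)}=c_{t}\exp\bigl(\int_{0}^{t}\tfrac{1}{2\Phi}\|\xi_{s}\|^{2}\mathrm{d}s\bigr)$. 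The two moment conditions in \eqref{Admissible consumption and distortion processes} are exactly the integrability hypotheses required to apply \cite[Prop.~2.2]{xing2017consumption} to the pair $(\widetilde{c},\widetilde{h}(c_{T})):=(\widetilde{c},\Lambda_{T}h(c_{T}))$ on the filtered probability space $(\Omega,\mathcal{F},\mathbb{Q}^{\xi})$. That proposition returns a unique continuous, strictly negative, class (D) solution $\widetilde{V}$ together with a square-integrable $\widetilde{Z}$; undoing the transformation via $V^{c,\xi}=\Lambda^{-1}\widetilde{V}$, $Z^{c,\xi}=\Lambda^{-1}\widetilde{Z}$ transfers all the stated regularity back to the original quantities because $\Lambda$ is continuous, adapted, and strictly positive.

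The main technical obstacle is the unboundedness of the linear-in-$V$ coefficient $\tfrac{1-\gamma}{2\Phi}\|\xi\|^{2}$: standard BSDE theory does not apply directly because this coefficient only lies in $L^{1}(\mathrm{d}t\otimes\mathrm{d}\mathbb{Q}^{\xi})$ and the Epstein--Zin aggregator is not globally Lipschitz in $v$. The integrating factor shifts this difficulty from the driver onto the random weight $\Lambda^{-1}$ that reappears when one passes back from $\widetilde{V}$ to $V^{c,\xi}$; class (D) of $V^{c,\xi}$ therefore demands uniform control of $\mathbb{E}^{\mathbb{Q}^{\xi}}[\Lambda_{\tau}^{-1}|\widetilde{V}_{\tau}|]$ over stopping times $\tau\le T$, and this is precisely what the exponential moment $\mathbb{E}^{\mathbb{Q}^{\xi}}[\exp(\int_{0}^{T}\tfrac{1}{2\Phi}\|\xi_{s}\|^{2}\mathrm{d}s)c_{T}^{1-\gamma}]<\infty$ in $\mathcal{A}_{a}$ is designed to deliver, via Hölder applied to the standard a priori bound on $\widetilde{V}$ in terms of $\widetilde{c}_{T}^{\,1-\gamma}$. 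A subsidiary point to check is that \cite[Prop.~2.2]{xing2017consumption} can legitimately be applied on $(\Omega,\mathcal{F},\mathbb{Q}^{\xi})$ rather than on the reference space $(\Omega,\mathcal{F},\mathbb{P})$; this is justified because $\xi\in\Xi^{2}_{\mathbb{P}}$ ensures through Novikov's criterion that $\mathrm{d}\mathbb{Q}^{\xi}/\mathrm{d}\mathbb{P}$ is a true probability density, and Girsanov's theorem then makes $B^{\mathbb{Q}^{\xi}}$ a genuine $\mathbb{Q}^{\xi}$-Brownian motion generating the same filtration.
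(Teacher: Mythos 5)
Your overall strategy coincides with the paper's: read \eqref{Epstein-Zin utility_Maenhout's style} as a BSDE under $\mathbb{Q}^{\xi}$, remove the linear penalty by an exponential integrating factor, and reduce to the pure Epstein--Zin BSDE covered by \cite[Prop.~2.2]{xing2017consumption}; the passage to the representation \eqref{Epstein-Zin utility_Maenhout's style_Integral form} and the transfer of continuity, strict negativity and the a.s.\ finiteness of $\int_{0}^{T}\|Z^{c,\xi}_{s}\|^{2}\mathrm{d}s$ back through the (a.s.\ finite) factor are fine in outline (your martingale-representation step is legitimate, though the correct justification is representability of $\mathbb{Q}^{\xi}$-martingales of the original filtration against $B^{\mathbb{Q}^{\xi}}$, not that $B^{\mathbb{Q}^{\xi}}$ generates the same filtration). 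Your integrating-factor algebra is correct: $\Lambda_{t}=\exp\bigl(\int_{0}^{t}\tfrac{1-\gamma}{2\Phi}\|\xi_{s}\|^{2}\mathrm{d}s\bigr)$ is exactly the weight that cancels $\tfrac{1-\gamma}{2\Phi}\|\xi\|^{2}V$, and the homogeneity computation yielding the rescaled consumption $\widetilde{c}_{t}=c_{t}\exp\bigl(\int_{0}^{t}\tfrac{1}{2\Phi}\|\xi_{s}\|^{2}\mathrm{d}s\bigr)$ is right; the paper instead weights $(1-\gamma)V$ by $e^{\int_{0}^{t}\frac{1}{2\Phi}\|\xi_{s}\|^{2}\mathrm{d}s}$, so its rescaled consumption carries the factor $e^{\int_{0}^{s}\frac{1}{2\Phi(1-\gamma)}\|\xi_{u}\|^{2}\mathrm{d}u}\le 1$ (at the price that this weight does not cancel the penalty exactly and leaves a residual linear term to be accounted for).

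The genuine gap is the assertion that the two moment conditions in \eqref{Admissible consumption and distortion processes} are ``exactly'' the hypotheses needed to apply \cite[Prop.~2.2]{xing2017consumption} to $\widetilde{c}$. That proposition requires, besides the terminal moment, the running-consumption moment $\mathbb{E}^{\mathbb{Q}^{\xi}}\bigl[\int_{0}^{T}e^{-\delta s}\widetilde{c}_{s}^{\,1-1/\psi}\mathrm{d}s\bigr]<\infty$. With your rescaling, $\widetilde{c}_{s}^{\,1-1/\psi}=e^{\frac{1-1/\psi}{2\Phi}\int_{0}^{s}\|\xi_{u}\|^{2}\mathrm{d}u}\,c_{s}^{1-1/\psi}\ge c_{s}^{1-1/\psi}$ because $\psi>1$, so this is strictly stronger than the first condition of $\mathcal{A}_{a}$ and does not follow from it; nor can it be obtained by H\"older from $\xi\in\Xi^{2}_{\mathbb{Q}^{\xi}}$, since $\mathcal{A}_{a}$ only gives an $L^{1}$ bound on $\int_{0}^{T}e^{-\delta s}c_{s}^{1-1/\psi}\mathrm{d}s$ and the available exponential moment controls $e^{a\int_{0}^{T}\|\xi\|^{2}\mathrm{d}s}$ only for $a\le 1$, which may be smaller than $\tfrac{1-1/\psi}{2\Phi}$ when $\Phi$ is small. (By contrast your terminal requirement is over-satisfied: $\widetilde{c}_{T}^{\,1-\gamma}=\Lambda_{T}c_{T}^{1-\gamma}\le c_{T}^{1-\gamma}$, much weaker than the second condition of $\mathcal{A}_{a}$.) This mismatch is precisely why the paper rescales the other way: its weight makes the rescaled consumption smaller than $c$, so the first condition of $\mathcal{A}_{a}$ covers the running term and the second is exactly the terminal moment. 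As written, your appeal to \cite[Prop.~2.2]{xing2017consumption} is therefore not justified for general $(c,\xi)\in\mathcal{A}_{a}$; to close the argument you must either verify $\mathbb{E}^{\mathbb{Q}^{\xi}}\bigl[\int_{0}^{T}e^{-\delta s}e^{\frac{1-1/\psi}{2\Phi}\int_{0}^{s}\|\xi_{u}\|^{2}\mathrm{d}u}c_{s}^{1-1/\psi}\mathrm{d}s\bigr]<\infty$ (e.g.\ by strengthening the admissibility set or imposing a bound on $\xi$), or rework the reduction so that the integrability you need is the one actually contained in $\mathcal{A}_{a}$.
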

\begin{proof}
	See Appendix~\ref{Non-empty control set_Proof}.
\end{proof}

In this section, we are interested in the optimal consumption, investment and reinsurance problem of an AAI with a constant liability $G\in\mathbb{R}$ at terminal time $T$ and robust recursive preference of Epstein-Zin type. Note that $G$ is not necessarily positive. Hence, we want to find the best strategy $(\widehat{c},\widehat{\pi},\widehat{\xi})$ solution to the optimization problem
\begin{align}\label{Problem_Ambiguity-averse}
	\sup_{c,\pi}~\inf_{\xi}~\mathbb{E}^{\mathbb{Q}^{\xi}}\Big[h(X_{T}^{c,\pi,\xi}-G)+\int_{0}^{T}\Big(f(c_{s},V_{s}^{c,\xi})+\frac{1}{2\Phi}\|\xi\|^{2}(1-\gamma)V_{s}^{c,\xi}\Big)\mathrm{d}s\Big],
\end{align}
where $X^{c,\pi,\xi}$ denotes the solution to the SDE~\eqref{Wealth_Ambiguity-averse} associated to the consumption $c$, the investment-reinsurance strategy $\pi$ and the distortion process $\xi$, with $\pi^{\intercal}:=\big((\pi^{S})^{\intercal},\pi^{re}\big)\Sigma$ (see the definition of $\Sigma$ just below \eqref{Wealth_Ambiguity-neutral0}).

To define the set of admissible consumption, investment, reinsurance and distortion strategies, we introduce the BSDE
\begin{align}\label{Auxiliary BSDE_AAI}
	\mathrm{d}Y_{t}&=-\big(\mathcal{H}(t,X_{t}^{c,\pi,\xi},Y_{t},Z_{t})+Z_{t}^{\intercal}\xi_{t}\big)\mathrm{d}t+Z_{t}^{\intercal}\mathrm{d}B_{t}^{\mathbb{Q}^{\xi}},\quad Y_{T}=-e^{-rT}G,
\end{align}
where the function $\mathcal{H}$ is to be defined. Hence, we define the set of admissible consumption, investment, reinsurance and distortion strategies as follows. 
\begin{defi}\label{Admissible strategies}
	A triple $(c,\pi,\xi)$ of consumption, investment-reinsurance and distortion strategies is \textit{admissible} if
	\begin{itemize}
		\item[1.] $(c,\xi)\in\mathcal{A}_{a}$ with $c_{T}=X_{T}^{c,\pi,\xi}+e^{rT}Y_{T}$;
		\item[2.] $X_{t}^{c,\pi,\xi}+e^{rt}Y_{t}>0$ for all $t\in[0,T]$;
		\item[3.] $(X_{\cdot}^{c,\pi,\xi}+e^{r\cdot}Y_{\cdot})^{1-\gamma}$ is of class (D) under $\mathbb{P}$.
	\end{itemize}
\end{defi}
We denote by $\mathcal{A}^{AAI}$ the set of admissible consumption, investment-reinsurance and distortion strategies. Therefore, we are interested in the following problem:
\begin{prob}\label{Mainpb_AAI}
	Find $(\widehat{c},\widehat{\pi},\widehat{\xi})\in\mathcal{A}^{AAI}$ such that 
	\begin{align}\label{General problem_AAI}
		&\mathcal{V}^{AAI}:=V_{0}^{\widehat{c},\widehat{\pi},\widehat{\xi}}:=\underset{(c,\pi,\xi)\in\mathcal{A}^{AAI}}{\sup~\inf}\mathbb{E}^{\mathbb{Q}^{\xi}}\Big[h(X_{T}^{c,\pi,\xi}-G)+\int_{0}^{T}\Big(f(c_{s},V_{s}^{c,\xi})+\frac{1}{2\Phi}\|\xi\|^{2}(1-\gamma)V_{s}^{c,\xi}\Big)\mathrm{d}s\Big].
	\end{align}
\end{prob}

\section{Solution to the AAI’s stochastic optimisation problem}\label{Solution to the robust optimisation problem}
We speculate that the optimal utility process takes the form
\begin{align}\label{EZ utility_Ansatz}
	\widehat{V}_{t}={e^{-\delta\theta t}}\frac{(X_{t}+e^{rt}Y_{t})^{1-\gamma}}{1-\gamma},\quad0\le t\le T,
\end{align}
where $(Y,Z)$ is the solution to the BSDE \eqref{Auxiliary BSDE_AAI}. We define the process
\begin{align}\label{Optimal martingale}
	M_{\cdot}^{c,\pi,\xi}&:={e^{-\delta\theta t}}\frac{(X_{\cdot}+e^{r\cdot}Y_{\cdot})^{1-\gamma}}{1-\gamma}\notag\\
	&\phantom{xx}+\int_{0}^{\cdot}\Big(f\big(c_{s},e^{-\delta\theta s}\frac{(X_{s}+e^{rs}Y_{s})^{1-\gamma}}{1-\gamma}\big)+\frac{1}{2\Phi}\|\xi\|^{2}(X_{s}+e^{rs}Y_{s})^{1-\gamma}\Big)\mathrm{d}s.
\end{align}
From the martingale optimality principle, the function $\mathcal{H}$ in \eqref{Auxiliary BSDE_AAI} must be chosen according to the following rules:	
\begin{itemize}
	\item[$(1)$] For any $(c,\pi)$, the process $M^{c,\pi,\xi}$ is a local submartingale for all $\xi$ such that $(c,\pi,\xi)\in\mathcal{A}^{AAI}$.
	\item[$(2)$] For any $\xi$, the process $M^{c,\pi,\xi}$ is a local supermartingale for all $(c,\pi)$ such that $(c,\pi,\xi)\in\mathcal{A}^{AAI}$.
	\item[$(3)$] There exists a $(\widehat{c},\widehat{\pi},\widehat{\xi})\in\mathcal{A}^{AAI}$ such that $M^{\widehat{c},\widehat{\pi},\widehat{\xi}}$ is a local martingale.
\end{itemize}

Recall $f$ defined in \eqref{Epstein-Zin generator}. Set $\mathcal{H}_{t}:=\mathcal{H}(t,X_{t},Y_{t},Z_{t})$ for all $t\in[0,T]$. To find $\mathcal{H}$, we apply It{\^o}'s formula to $M^{c,\pi,\xi}$ in \eqref{Optimal martingale} and obtain
\begin{align}\label{OptEZ_Differential_AAI}
	\mathrm{d}M_{t}^{c,\pi,\xi}&=-\delta\theta{e^{-\delta\theta t}}\frac{(X_{t}+e^{rt}Y^{0})^{1-\gamma}}{1-\gamma}\mathrm{d}t+re^{rt}Y_{t}e^{-\delta\theta t}(X_{t}+e^{rt}Y_{t})^{-\gamma}\mathrm{d}t\nonumber\\
	&+{e^{-\delta\theta t}}(X_{t}+e^{rt}Y_{t})^{-\gamma}\mathrm{d}X_{t}+e^{rt}{e^{-\delta\theta t}}(X_{t}+e^{rt}Y_{t})^{-\gamma}\mathrm{d}Y_{t}\nonumber\\
	&-\frac{\gamma}{2}{e^{-\delta\theta t}}(X_{t}+e^{rt}Y_{t})^{-\gamma-1}(\mathrm{d}X_{t})^{2}-\frac{\gamma}{2}e^{2rt}{e^{-\delta\theta t}}(X_{t}+e^{rt}Y_{t})^{-\gamma-1}(\mathrm{d}Y_{t})^{2}\nonumber\\
	&-\gamma e^{rt}{e^{-\delta\theta t}}(X_{t}+e^{rt}Y_{t})^{-\gamma-1}\mathrm{d}X_{t}\mathrm{d}Y_{t}+f\big(c_{t},{e^{-\delta\theta t}}\frac{(X_{t}+e^{rt}Y_{t})^{1-\gamma}}{1-\gamma}\big)\mathrm{d}t\nonumber\\
	&=-\delta\theta e^{-\delta\theta t}\frac{(X_{t}+e^{rt}Y_{t})^{1-\gamma}}{1-\gamma}\mathrm{d}t+e^{-\delta\theta t}(X_{t}+e^{rt}Y_{t})^{-\gamma}rX_{t}\mathrm{d}t\nonumber\\
	&+{e^{-\delta\theta t}}(X_{t}+e^{rt}Y_{t})^{-\gamma}\pi_{t}^{\intercal}\eta \mathrm{d}t+e^{-\delta\theta t}(X_{t}+e^{rt}Y_{t})^{-\gamma} \pi_{t}^{\intercal}\mathrm{d}B_{t}^{\mathbb{Q}^{\xi}}\nonumber\\
	&-e^{-\delta\theta t}(X_{t}+e^{rt}Y_{t})^{-\gamma}c_{t}\mathrm{d}t -e^{-\delta\theta t}(X_{t}+e^{rt}Y_{t})^{-\gamma}\pi_{t}^{\intercal}\xi_{t} \mathrm{d}t\nonumber\\
	&-e^{rt}e^{-\delta\theta t}(X_{t}+e^{rt}Y_{t})^{-\gamma}\mathcal{H}_{t} \mathrm{d}t-e^{rt}e^{-\delta\theta t}(X_{t}+e^{rt}Y_{t})^{-\gamma} Z_{t}^{\intercal}\xi_{t}\mathrm{d}t\nonumber\\
	&+e^{rt}e^{-\delta\theta t}(X_{t}+e^{rt}Y_{t})^{-\gamma}Z_{t}^{\intercal} \mathrm{d}B_{t}^{\mathbb{Q}^{\xi}}-\frac{\gamma}{2}e^{-\delta\theta t}(X_{t}+e^{rt}Y_{t})^{-\gamma-1}\pi_{t}^{\intercal}\pi_{t}\mathrm{d}t\nonumber\\
	&-\frac{\gamma}{2}e^{2rt}e^{-\delta\theta t}(X_{t}+e^{rt}Y_{t})^{-\gamma-1}\|Z_{t}\|^{2}\mathrm{d}t-\gamma e^{rt}e^{-\delta\theta t}(X_{t}+e^{rt}Y_{t})^{-\gamma-1}\pi_{t}^{\intercal} Z_{t}\mathrm{d}t\nonumber\\
	&+\delta\frac{c_{t}^{1-\frac{1}{\psi}}}{1-\frac{1}{\psi}}e^{-\delta\theta t} (X_{t}+e^{rt}Y_{t})^{-\gamma+\frac{1}{\psi}}\mathrm{d}t+\frac{1}{2\Phi}\|\xi\|^{2}e^{-\delta\theta t}(X_{t}+e^{rt}Y_{t})^{1-\gamma}\mathrm{d}t\notag\\
	&+re^{rt}Y_{t}e^{-\delta\theta t}(X_{t}+e^{rt}Y_{t})^{-\gamma}\mathrm{d}t \nonumber\\
	&=\Big[\Big(-c_{t}+\delta\frac{c_{t}^{1-\frac{1}{\psi}}}{1-\frac{1}{\psi}}(X_{t}+e^{rt}Y_{t})^{\frac{1}{\psi}}\Big){e^{-\delta\theta t}} (X_{t}+e^{rt}Y_{t}) ^{-\gamma}\nonumber\\
	&-\frac{\gamma+\Phi}{2}e^{-\delta\theta t}(X_{t}+e^{rt}Y_{t})^{-\gamma-1} \Big(\pi_{t}^{\intercal}\pi_{t}+2\pi_{t}^{\intercal}\Big(e^{rt}Z_{t}{-\frac{1}{\gamma+\Phi}}(X_{t}+e^{rt}Y_{t})\eta\Big)\Big)\nonumber\\
	&-\frac{\gamma+\Phi}{2}e^{2rt}e^{-\delta\theta t}(X_{t}+e^{rt}Y_{t})^{-\gamma-1} \|Z_{t}\|^{2}-e^{rt}e^{-\delta\theta t}(X_{t}+e^{rt}Y_{t})^{-\gamma} \mathcal{H}_{t}\nonumber\\
	&+re^{rt}Y_{t}e^{-\delta\theta t} (X_{t}+e^{rt}Y_{t})^{-\gamma}+e^{-\delta\theta t}(X_{t}+e^{rt}Y_{t})^{-\gamma}rX_{t}\nonumber\\
	&-\delta\theta e^{-\delta\theta t}\frac{(X_{t}+e^{rt}Y_{t})^{1-\gamma}} {1-\gamma}\Big]\mathrm{d}t+e^{-\delta\theta t}(X_{t}+e^{rt}Y_{t})^{-\gamma} \big(\pi_{t}^{\intercal}+e^{rt}Z_{t}^{\intercal}\big)\mathrm{d}B_{t} ^{\mathbb{Q}^{\xi}}\notag\\
	&+\frac{1}{2\Phi}e^{-\delta\theta t}(X_{t}+e^{rt}Y_{t})^{1-\gamma} \Big(\|\xi_{t}\|^{2}-2\Phi(X_{t}+e^{rt}Y_{t})^{-1}\big(\pi_{t}^{\intercal}+e^{rt}Z_{t}^{\intercal}\big)\xi_{t}\Big)\mathrm{d}t\notag\\
    &=e^{-\delta\theta t}(X_{t}+e^{rt}Y_{t})^{-\gamma}\Bigg[-c_{t}+\delta \frac{c_{t}^{1-\frac{1}{\psi}}}{1-\frac{1}{\psi}}(X_{t}+e^{rt}Y_{t})^{\frac{1}{\psi}}-e^{rt}Z_{t}^{\intercal}\eta\nonumber\\
	&+\frac{1}{2\Phi}(X_{t}+e^{rt}Y_{t})\big\|\xi_{t}-\Phi(X_{t}+e^{rt}Y_{t})^{-1}\big(\pi_{t}+e^{rt}Z_{t}\big)\big\|^{2}\notag\\
	&-\frac{\gamma+\Phi}{2}(X_{t}+e^{rt}Y_{t})^{-1}\Big\|\pi_{t}+\Big(e^{rt}Z_{t}{-\frac{1}{\gamma+\Phi}}(X_{t}+e^{rt}Y_{t})\eta\Big)\Big\|^{2}\nonumber\\
	&+\frac{1}{2}\frac{1}{\gamma+\Phi}(X_{t}+e^{rt}Y_{t})\|\eta\|^{2}+r(X_{t}+e^{rt}Y_{t})-\frac{\delta\theta}{1-\gamma}(X_{t}+e^{rt}Y_{t})-e^{rt}\mathcal{H}_{t}\Bigg]\mathrm{d}t\nonumber\\
	&+e^{-\delta\theta t}(X_{t}+e^{rt}Y_{t})^{-\gamma}\big(\pi_{t}^{\intercal} +e^{rt}Z_{t}^{\intercal}\big)\mathrm{d}B_{t}^{\mathbb{Q}^{\xi}}.
\end{align}
Applying the rules $1,2$ and $3$ above, we expect that $(1)$ for any $(c,\pi)$, the drift in \eqref{OptEZ_Differential_AAI} is non-negative for all $\xi$, $(2)$ for any $\xi$, the drift in \eqref{OptEZ_Differential_AAI} is non-positive for all $(c,\pi)$, and $(3)$ the drift in \eqref{OptEZ_Differential_AAI} is zero for the optimal triple $(\widehat{c},\widehat{\pi},\widehat{\xi})$. Hence, the generator $\mathcal{H}$ for \eqref{Auxiliary BSDE_AAI} can be obtained by formally taking the infimum over $\xi$ and a supremum over $c$ and $\pi$ in the drift in \eqref{OptEZ_Differential_AAI} and setting it to be zero. That is,
\begin{align}\label{YDrift_AAI}
	\mathcal{H}(t,X_{t},Y_{t},Z_{t})&=e^{-rt}\max_{c}\Big\{-c_{t}+\delta\frac{c_{t}^{1-\frac{1}{\psi}}}{1-\frac{1}{\psi}}(X_{t}+e^{rt}Y_{t})^{\frac{1}{\psi}}\Big\}\notag\\
	&+\min_{\xi}\Big\{\frac{1}{2\Phi}e^{-rt}(X_{t}+e^{rt}Y_{t})\big\|\xi_{t}-\Phi(X_{t}+e^{rt}Y_{t})^{-1}\big(\pi_{t}+e^{rt}Z_{t}\big)\big\|^{2}\}\notag\\
	&+\max_{\pi}\Big\{-\frac{\gamma+\Phi}{2}e^{-rt}(X_{t}+e^{rt}Y_{t})^{-1}\Big\|\pi_{t}+\Big(e^{rt}Z_{t}{-\frac{1}{\gamma+\Phi}}(X_{t}+e^{rt}Y_{t})\eta\Big)\Big\|^{2}\Big\}\nonumber\\
	&+\frac{1}{2}\frac{1}{\gamma+\Phi}e^{-rt}(X_{t}+e^{rt}Y_{t})\|\eta\|^{2}+re^{-rt}(X_{t}+e^{rt}Y_{t})-\frac{\delta\theta}{1-\gamma}e^{-rt}(X_{t}+e^{rt}Y_{t}).
\end{align}
Therefore, we deduce from the three optimisation problems in \eqref{YDrift_AAI} that the candidate optimal consumption $\widehat{c}$, the candidate optimal investment-reinsurance $\widehat{\pi}$ and the candidate optimal distortion process $\widehat{\xi}$ are given by
\begin{align}
	\widehat{c}_{t}&=\delta^{\psi}(X_{t}+e^{rt}Y_{t})\label{Candidate consumption_AAI}\\
	\widehat{\pi}_{t}&=-e^{rt}Z_{t}+\frac{1}{\gamma+\Phi}(X_{t}+e^{rt}Y_{t})\eta\label{Candidate investment_AAI}\\
	\widehat{\xi}_{t}&=\Phi(X_{t}+e^{rt}Y_{t})^{-1}\big(\widehat{\pi}_{t}+e^{rt}Z_{t}\big)=\frac{\Phi}{\gamma+\Phi}\eta,\label{Candidate distortion process}
\end{align}
Hence, substituting \eqref{Candidate consumption_AAI}, \eqref{Candidate investment_AAI} and \eqref{Candidate distortion process} into \eqref{Wealth_Ambiguity-averse} and \eqref{YDrift_AAI}, the function $\mathcal{H}$ and the wealth process $X=:\widehat{X}$ are given by
\begin{align}
	&\mathcal{H}(t,\widehat{X}_{t},Y_{t},Z_{t})=e^{-rt}\Big(\frac{\delta^{\psi}}{\psi-1}+r+\frac{1}{2}\frac{1}{\gamma+\Phi}\|\eta\|^{2}{-\frac{\delta\theta}{1-\gamma}}\Big)(\widehat{X}_{t}+e^{rt}Y_{t})-Z_{t}^{\intercal}\eta\label{Auxiliary BSDE generator_AAI}\\
	&\text{and }\mathrm{d}\widehat{X}_{t} =\Big(r\widehat{X}_{t}+\big(-\delta^{\psi}{+\frac{1}{\gamma+\Phi}}\|\eta\|^{2}\big)(\widehat{X}_{t}+e^{rt}Y_{t})-{\frac{\gamma}{\gamma+\Phi}}e^{rt}Z_{t}^{\intercal}\eta\Big)\mathrm{d}t\nonumber\\
	&\phantom{XXXX}+\Big({\frac{1}{\gamma+\Phi}}(\widehat{X}_{t}+e^{rt}Y_{t})\eta^{\intercal}-e^{rt}Z_{t}^{\intercal}\Big)\mathrm{d}B_{t}^{\mathbb{Q}^{\widehat{\xi}}}.\label{Wealth optimal_AAI}
\end{align}
Thus, a candidate solution to Problem \ref{Mainpb_AAI} is given by \eqref{Candidate consumption_AAI}, \eqref{Candidate investment_AAI} and \eqref{Candidate distortion process}, provided that the FBSDE
\begin{align}\label{Optimal FBSDE}
	\begin{cases}
		\mathrm{d}\widehat{X}_{t}&=\Big(r\widehat{X}_{t}+\big(-\delta^{\psi}{+\frac{1}{\gamma+\Phi}}\|\eta\|^{2}\big)(\widehat{X}_{t}+e^{rt}Y_{t})-{\frac{\gamma}{\gamma+\Phi}}e^{rt}Z_{t}^{\intercal}\eta\Big)\mathrm{d}t\\
		&\phantom{X}+\Big({\frac{1}{\gamma+\Phi}}(\widehat{X}_{t}+e^{rt}Y_{t})\eta^{\intercal}-e^{rt}Z_{t}^{\intercal}\Big)\mathrm{d}B_{t}^{\mathbb{Q}^{\widehat{\xi}}}.\\
		\mathrm{d}Y_{t}&=-\Big(e^{-rt}\Big(\frac{\delta^{\psi}}{\psi-1}+r+\frac{1}{2}\frac{1}{\gamma+\Phi}\|\eta\|^{2}{-\frac{\delta\theta}{1-\gamma}}\Big)(\widehat{X}_{t}+e^{rt}Y_{t})-{\frac{\gamma}{\gamma+\Phi}}Z_{t}^{\intercal}\eta\Big)\mathrm{d}t\\
		&\phantom{X}+Z_{t}^{\intercal}\mathrm{d}B_{t}^{\mathbb{Q}^{\widehat{\xi}}}\\
		\widehat{X}_{0}&=x+\big(\nu^{in}-\nu^{re}\big)\kappa\zeta\int_{0}^{T}e^{-rs}\mathrm{d}s,~Y_{T}=-e^{-rT}G
	\end{cases}
\end{align}
is well-defined in an appropriate function space. In the sequel, to simplify the notations, we introduce the process $H^{t}=\{H_{s}^{t},t\le s\le T\}$ defined by
\begin{align}\label{Deflator}
	H_{s}^{t}&:=\mathcal{E}(\int-\eta^{\intercal}\mathrm{d}B)_{s}\big/\mathcal{E}(\int-\eta^{\intercal}\mathrm{d}B)_{t},~t\le s\le T,
\end{align}
with $H:=H^{t}$ for $t=0$, and the process $\varphi=\{\varphi_{t},0\leq t\le T\}$ given by 
\begin{align}\label{Auxiliary process_AAI}
	\varphi_{t}&:=\exp\Big(\Big(-\frac{\delta^{\psi}\psi}{\psi-1}+{\frac{\gamma+3\Phi-1}{2(\gamma+\Phi)^{2}}}\|\eta\|^{2}{+\frac{\delta\theta}{1-\gamma}}\Big)t+\frac{1}{\gamma+\Phi}\eta^{\intercal}B_{t}\Big),~t\le s\le T.
\end{align}
We can now confirm the well-definedness of the FBSDE \eqref{Optimal FBSDE}.
\begin{prop}\label{Existence result on the FBSDE}
	Let $r_{m}$ and $\widetilde{x}$ denote the constants defined by\\ $r_{m}:=-\frac{\delta^{\psi}\psi}{\psi-1}-r-\frac{\gamma-\Phi}{2(\gamma+\Phi)^{2}}\|\eta\|^{2}+\frac{\delta\theta}{1-\gamma}$ and $\widetilde{x}:=\frac{r_{m}\big(x+\frac{\kappa\zeta}{r}\big(\nu^{in}-\nu^{re}\big)\big(1-e^{-rT}\big)-e^{-rT}G\big)}{r_{m}+\big(r_{m}+\delta^{\psi}-\frac{\Phi}{(\gamma+\Phi)^{2}}\|\eta\|^{2}\big)\big(e^{r_{m}T}-1\big)}$. Assume that $\widetilde{x}$ is finite. Then a solution $(\widehat{X},Y,Z)\in\mathcal{H}_{\mathbb{P}}^{q}\times\mathcal{H}_{\mathbb{P}}^{q}\times\mathbb{H}_{\mathbb{P}}^{q},~q\ge1$, to the FBSDE \eqref{Optimal FBSDE} is given by
	{\small
		\begin{align}\label{Solution to the FBSDE_AAI}
			\begin{cases}
				\widehat{X}_{t}&=\widetilde{x}\varphi_{t}-e^{rt}Y_{t}\\
				Y_{t}&=-e^{-rT}G\\
				&\phantom{X}+\widetilde{x}\Big(-r_{m}-\delta^{\psi}+\frac{\Phi}{(\gamma+\Phi)^{2}}\|\eta\|^{2}\Big)\frac{e^{r_{m}T}-e^{r_{m}t}}{r_{m}}\exp\Big(\frac{-1+2(\gamma+\Phi)}{2(\gamma+\Phi)^{2}}\|\eta\|^{2}t+\frac{1}{\gamma+\Phi}\eta^{\intercal}B_{t}\Big)\\
				Z_{t}&=\frac{1}{\gamma+\Phi}\big(Y_{t}+e^{-rT}G\big)\eta.
			\end{cases}
	\end{align}}
	
	Moreover, the solution $(\widehat{X},Y,Z)$ given by \eqref{Solution to the FBSDE_AAI} is the unique local solution to the FBSDE \eqref{Optimal FBSDE}.
\end{prop}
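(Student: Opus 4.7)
The plan is to decouple the FBSDE \eqref{Optimal FBSDE} through the auxiliary process $P_t := \widehat{X}_t + e^{rt}Y_t$. Applying It{\^o}'s formula, $\mathrm{d}P_t = \mathrm{d}\widehat{X}_t + re^{rt}Y_t\,\mathrm{d}t + e^{rt}\mathrm{d}Y_t$, and substituting the forward and backward drifts from \eqref{Optimal FBSDE}, the terms proportional to $Z_t^\intercal\eta$ cancel---this is precisely the design that made $\mathcal{H}$ in \eqref{Auxiliary BSDE generator_AAI} admissible. Converting from the $\mathbb{Q}^{\widehat{\xi}}$-Brownian motion back to $\mathbb{P}$ via $\mathrm{d}B_t^{\mathbb{Q}^{\widehat{\xi}}} = \mathrm{d}B_t + \frac{\Phi}{\gamma+\Phi}\eta\,\mathrm{d}t$ turns $P$ into a closed, scalar geometric Brownian motion whose drift and diffusion coefficients coincide, after some algebra, with those of $\varphi_t$ in \eqref{Auxiliary process_AAI}. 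Hence $P_t = \widetilde{x}\varphi_t$ with $\widetilde{x} := P_0 = \widehat{X}_0 + Y_0$, which is the first identity in \eqref{Solution to the FBSDE_AAI}.

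Next, I solve the now-scalar linear BSDE for $Y$. Inserting $\widehat{X}_t + e^{rt}Y_t = \widetilde{x}\varphi_t$ into the backward equation and translating to $\mathbb{P}$, the driver reduces to $-e^{-rt}c_{1}\widetilde{x}\varphi_t + Z_t^\intercal\eta$, where $c_{1} := \frac{\delta^{\psi}}{\psi-1}+r+\frac{\|\eta\|^{2}}{2(\gamma+\Phi)}-\frac{\delta\theta}{1-\gamma}$. A second change of measure, with density $H_T$ from \eqref{Deflator}, turns $\eta\,\mathrm{d}t + \mathrm{d}B_t$ into $\mathrm{d}\widetilde{B}_t$, so $Y$ admits the representation $Y_t = \mathbb{E}^{\widetilde{\mathbb{P}}}\bigl[-e^{-rT}G + c_{1}\widetilde{x}\int_t^T e^{-rs}\varphi_s\,\mathrm{d}s\,\big|\,\mathcal{F}_t\bigr]$. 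Evaluating $\mathbb{E}^{\widetilde{\mathbb{P}}}[\varphi_s\mid\mathcal{F}_t]$ is a Gaussian computation that yields, up to a deterministic exponential, the random factor $\exp\!\bigl(\frac{-1+2(\gamma+\Phi)}{2(\gamma+\Phi)^{2}}\|\eta\|^{2}t + \frac{1}{\gamma+\Phi}\eta^\intercal B_t\bigr)$; integrating in $s \in [t,T]$ produces $\frac{e^{r_{m}T}-e^{r_{m}t}}{r_{m}}$, and collecting the prefactor gives exactly $-r_{m}-\delta^{\psi}+\frac{\Phi}{(\gamma+\Phi)^{2}}\|\eta\|^{2}$, reproducing the second formula in \eqref{Solution to the FBSDE_AAI}.

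The constant $\widetilde{x}$ is pinned down by the forward initial condition: $\widetilde{x} = \widehat{X}_0 + Y_0$, where $\widehat{X}_0 = x + (\nu^{in}-\nu^{re})\kappa\zeta\int_0^T e^{-rs}\mathrm{d}s$ and $Y_0$ is read off from the previous step at $t=0$. This yields a single linear equation whose solution is exactly the stated $\widetilde{x}$, finite by assumption. The process $Z$ is identified from the martingale part of $Y$ under $\mathbb{P}$: since the stochastic-integral contribution of $Y_t+e^{-rT}G$ flows only through the exponential factor above, matching coefficients gives $Z_t = \frac{1}{\gamma+\Phi}(Y_t+e^{-rT}G)\eta$. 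Integrability in $\mathcal{H}^{q}_{\mathbb{P}}\times\mathcal{H}^{q}_{\mathbb{P}}\times\mathbb{H}^{q}_{\mathbb{P}}$ for every $q\geq 1$ is then immediate, as all three processes are affine combinations of geometric Brownian motions with deterministic bounded exponents, which possess moments of every order.

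For uniqueness as a local solution, the argument runs in reverse: the scalar linear SDE forces $P = \widehat{X}+e^{r\cdot}Y = \widetilde{x}\varphi$ up to indistinguishability; substituting back reduces the BSDE to a linear equation with Lipschitz dependence on $Z$ and deterministic forcing, so classical linear BSDE theory (or direct use of the deflator $H$) gives uniqueness of $(Y,Z)$, whence $\widehat{X}$ is uniquely determined. The main obstacle I anticipate is not conceptual but computational bookkeeping: the two measure changes ($\mathbb{P}\leftrightarrow\mathbb{Q}^{\widehat{\xi}}$ and $\mathbb{P}\leftrightarrow\widetilde{\mathbb{P}}$) interact with numerous $\|\eta\|^{2}$-coefficients that must assemble precisely into the constants $r_{m}$ and $\frac{-1+2(\gamma+\Phi)}{2(\gamma+\Phi)^{2}}\|\eta\|^{2}$; a single sign or factor slip will propagate all the way to the final expression and obscure the closed-form identification.
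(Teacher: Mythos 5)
Your proposal is correct, and I checked the key identifications: the drift of $P_t:=\widehat{X}_t+e^{rt}Y_t$ under $\mathbb{P}$ is $\bigl(-\tfrac{\delta^{\psi}\psi}{\psi-1}+\tfrac{\gamma+3\Phi}{2(\gamma+\Phi)^{2}}\|\eta\|^{2}+\tfrac{\delta\theta}{1-\gamma}\bigr)P_t$, which is exactly the It\^o drift of $\varphi$; your constant $b-r$ equals $r_{m}$; your $c_{1}$ equals $-r_{m}-\delta^{\psi}+\tfrac{\Phi}{(\gamma+\Phi)^{2}}\|\eta\|^{2}$; the exponent collects to $\tfrac{-1+2(\gamma+\Phi)}{2(\gamma+\Phi)^{2}}\|\eta\|^{2}t$; and the fixed-point equation $\widetilde{x}=\widehat{X}_{0}+Y_{0}$ reproduces the stated $\widetilde{x}$. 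Your route, however, is genuinely different from the paper's. The paper proves the proposition by pure verification: it applies It\^o's formula to the explicit candidate $Y$ in \eqref{Solution to the FBSDE_AAI}, reads off $Z_t=\tfrac{1}{\gamma+\Phi}(Y_t+e^{-rT}G)\eta$, rewrites the dynamics under $\mathbb{Q}^{\widehat{\xi}}$ to match the backward generator, treats $\widehat{X}$ analogously, and then simply cites Lemma~2.1 of \cite{xie2020exploration} for local uniqueness. You instead derive the solution: decouple the FBSDE through the sum process $P$, identify $P=\widetilde{x}\varphi$ from a scalar linear SDE, solve the residual linear BSDE by the deflator/measure-change representation, and pin down $\widetilde{x}$ from the initial condition. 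Your approach buys more — it explains where the formulas and the constant $\widetilde{x}$ come from and gives a self-contained uniqueness argument — while the paper's verification is shorter and outsources uniqueness. The only point to tighten is the uniqueness step: the conditional-expectation representation of the linear BSDE (and hence uniqueness) requires specifying an integrability class in which the deflated stochastic integral is a true martingale (e.g.\ the $\mathcal{H}^{q}_{\mathbb{P}}\times\mathcal{H}^{q}_{\mathbb{P}}\times\mathbb{H}^{q}_{\mathbb{P}}$ spaces of the statement, or a class-(D) condition), and you should note that the nondegeneracy of the linear equation determining $P_{0}$ is precisely the finiteness assumption on $\widetilde{x}$; with that made explicit, your argument yields uniqueness in that class, which is a slightly different (and arguably stronger) statement than the paper's small-duration local uniqueness via \cite{xie2020exploration}.
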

\begin{proof}
	See Appendix \ref{Solution to the FBSDE_Proof}.
\end{proof}
\begin{remark}
	Note that for $\Phi=0$, the constant $\widetilde{x}$ ($=:\widetilde{x}^{0}$) in Proposition \ref{Existence result on the FBSDE} is finite. Indeed, 
	\begin{align*}
		\widetilde{x}^{0}&=\frac{r_{m}\big(x+\frac{\kappa\zeta}{r}\big(\nu^{in}-\nu^{re}\big)\big(1-e^{-rT}\big)-e^{-rT}G\big)}{r_{m}+\big(r_{m}+\delta^{\psi}\big)\big(e^{r_{m}T}-1\big)}\notag\\
		&=\frac{x+\frac{\kappa\zeta}{r}\big(\nu^{in}-\nu^{re}\big)\big(1-e^{-rT}\big)-e^{-rT}G}{e^{r_{m}T}+\frac{\delta^{\psi}}{r_{m}}\big(e^{r_{m}T}-1\big)}\notag\\
		&=\frac{x+\frac{\kappa\zeta}{r}\big(\nu^{in}-\nu^{re}\big)\big(1-e^{-rT}\big)-e^{-rT}G}{e^{r_{m}T}+\int_{0}^{T}\delta^{\psi}e^{r_{m}s}\mathrm{d}s}.
	\end{align*}
	Because $e^{r_{m}T}+\int_{0}^{T}\delta^{\psi}e^{r_{m}s}\mathrm{d}s>0$, we have $\widetilde{x}^{0}:=\widetilde{x}$ finite.
\end{remark}

To ensure the optimality of the candidate strategies given by \eqref{Candidate consumption_AAI}, \eqref{Candidate investment_AAI} and \eqref{Candidate distortion process} we consider the following conditions.
\begin{assum}\label{Well-definedness of the reinsurance strategy}
	$\widetilde{x}>0$, $\frac{-\rho^{S}\mu+\sigma\nu^{re}\kappa\zeta}{\sigma}>0$ and $-r_{m}-\delta^{\psi}+\frac{\Phi}{(\gamma+\Phi)^{2}}\|\eta\|^{2}<0$.
\end{assum}
\begin{remark}\label{Particular case where xtilde is positive}
	Note that when the liability is non-negative (that is, $G\ge0$), then Assumption \ref{Well-definedness of the reinsurance strategy} yields that the process $Y$, given in \eqref{Solution to the FBSDE_AAI}, is negative. As a by-product, we obtain that the optimal wealth process $\widehat{X}$, given in \eqref{Solution to the FBSDE_AAI}, is positive. Indeed, for $G\ge0$, suppose Assumption \ref{Well-definedness of the reinsurance strategy} holds. Then the right side of the second equality in \eqref{Solution to the FBSDE_AAI} is negative; because $\frac{e^{r_{m}T}-e^{r_{m}t}}{r_{m}}>0$ for all $r_{m}\in\mathbb{R}$. Hence, $Y_{t}<0,~t\in[0,T]$. Using the first equality in \eqref{Solution to the FBSDE_AAI} and the fact that $\widetilde{x}>0$ and $\varphi_{t}>0,~t\in[0,T]$, we deduce that $\widehat{X}_{t}>0$ for all $t\in[0,T]$.
\end{remark}
We are now ready to give the main result of the present paper.
\begin{thm}\label{Mainresult_AAI}
	Suppose Assumption \ref{Well-definedness of the reinsurance strategy} holds. Let $\widetilde{x}$ be defined as in Proposition~\ref{Existence result on the FBSDE}. Let $\varGamma$ be the process defined by $\varGamma_{t}:=-\frac{1}{\gamma+\Phi}e^{-rt}\big(Y_{t}+e^{-rT}G\big)+\frac{1}{\gamma+\Phi}\widetilde{x}\varphi_{t}$ for $t\in[0,T]$. Then the robust optimal consumption $\widehat{c}$, distortion process $\widehat{\xi}$, investment $\widehat{\pi}^{S}$ and reinsurance $\widehat{\pi}^{re}$ strategies given by
	\begin{align}\label{Optimal strategy_AAI}
		\begin{cases}
			\widehat{c}_{t}&=\delta^{\psi}\widetilde{x}\varphi_{t},~~\widehat{\xi}_{t}=\frac{\Phi}{\gamma+\Phi}\frac{1}{\sigma\rho^{re}\sqrt{\kappa\beta}}\left( \begin{matrix}\mu\rho^{re}\sqrt{\kappa\beta} \\ -\rho^{S}\mu+\sigma\nu^{re}\kappa\zeta\end{matrix} \right)\\
			\widehat{\pi}_{t}^{S}&=\frac{1}{\kappa\beta\sigma^{2}(\rho^{re})^{2}}\Big(\mu\kappa\beta(\rho^{re})^{2}-\rho^{S}\Big(-\rho^{S}\mu+\sigma\nu^{re}\kappa\zeta\Big)\Big)\varGamma_{t}\\
			\widehat{\pi}_{t}^{re}&=\frac{1}{\kappa\beta\sigma(\rho^{re})^{2}}\Big(-\rho^{S}\mu+\sigma\nu^{re}\kappa\zeta\Big)\varGamma_{t}
		\end{cases}
	\end{align}
	solve the control problem \eqref{General problem_AAI}, and their associated value function is given by
	\begin{align}\label{Value function_Example}
		\mathcal{V}^{AAI}&=\frac{1}{1-\gamma}\left(\frac{r_{m}\big(x+\frac{\kappa\zeta}{r}\big(\nu^{in}-\nu^{re}\big)\big(1-e^{-rT}\big)-e^{-rT}G\big)}{r_{m}+\big(r_{m}+\delta^{\psi}-\frac{\Phi}{(\gamma+\Phi)^{2}}\|\eta\|^{2}\big)\big(e^{r_{m}T}-1\big)}\right)^{1-\gamma}.
	\end{align}
\end{thm}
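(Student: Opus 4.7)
The approach will follow the martingale optimality principle set up in Section~\ref{Solution to the robust optimisation problem}: the generator $\mathcal{H}$ of \eqref{Auxiliary BSDE_AAI} is constructed so that the drift of $M^{c,\pi,\xi}$ in \eqref{OptEZ_Differential_AAI} vanishes at the candidate triple, is non-positive when $\xi=\widehat{\xi}$ is fixed and $(c,\pi)$ is varied, and is non-negative when $(c,\pi)=(\widehat{c},\widehat{\pi})$ is fixed and $\xi$ is varied. Combined with the explicit FBSDE solution from Proposition~\ref{Existence result on the FBSDE}, this will yield both the closed-form strategies \eqref{Optimal strategy_AAI} and the value $\mathcal{V}^{AAI}$.

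First, I would translate the abstract candidates \eqref{Candidate consumption_AAI}--\eqref{Candidate distortion process} into the explicit formulas of the theorem. Using $\widehat{X}_t+e^{rt}Y_t=\widetilde{x}\varphi_t$ and $Z_t=\tfrac{1}{\gamma+\Phi}(Y_t+e^{-rT}G)\eta$ from \eqref{Solution to the FBSDE_AAI}, one obtains $\widehat{c}_t=\delta^{\psi}\widetilde{x}\varphi_t$ and $\widehat{\xi}_t=\tfrac{\Phi}{\gamma+\Phi}\eta$ directly, while $\widehat{\pi}_t$ becomes proportional to $\eta$ with scalar factor $\varGamma_t$. Computing $\eta=\Sigma^{-1}(\mu,\nu^{re}\kappa\zeta)^{\intercal}$ explicitly and then inverting the map $\pi^{\intercal}=(\pi^S,\pi^{re})\Sigma$ produces the component-wise expressions for $\widehat{\xi}$, $\widehat{\pi}^S$, and $\widehat{\pi}^{re}$ displayed in \eqref{Optimal strategy_AAI}.

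Second, I would verify that $(\widehat{c},\widehat{\pi},\widehat{\xi})\in\mathcal{A}^{AAI}$ in the sense of Definition~\ref{Admissible strategies}. Assumption~\ref{Well-definedness of the reinsurance strategy} gives $\widetilde{x}>0$, hence $\widehat{X}_t+e^{rt}Y_t=\widetilde{x}\varphi_t>0$ on $[0,T]$ because $\varphi$ is a positive geometric Brownian motion; the boundary relation $\widehat{c}_T=\widehat{X}_T+e^{rT}Y_T=\widehat{X}_T-G$ follows from $Y_T=-e^{-rT}G$. The integrability conditions in $\mathcal{A}_a$ and the class-(D) property of $(\widetilde{x}\varphi_\cdot)^{1-\gamma}$ reduce to standard log-normal moment estimates on $[0,T]$, and the deterministic distortion $\widehat{\xi}=\tfrac{\Phi}{\gamma+\Phi}\eta$ clearly lies in $\Xi^{2}_{\mathbb{Q}^{\widehat{\xi}}}$, so Proposition~\ref{Non-empty control set} applies to give a well-defined $V^{\widehat{c},\widehat{\xi}}$.

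Finally, I would extract the saddle-point inequalities from the martingale optimality principle. After localisation, the class-(D) condition in Definition~\ref{Admissible strategies}, combined with the integrability built into $\mathcal{A}_a$, will upgrade the local (super/sub)martingale properties of $M^{c,\pi,\widehat{\xi}}$ and $M^{\widehat{c},\widehat{\pi},\xi}$ to true ones under the relevant equivalent measures, while $M^{\widehat{c},\widehat{\pi},\widehat{\xi}}$ becomes a true martingale. Taking expectations between $0$ and $T$ and identifying the terminal value of $M^{c,\pi,\xi}$ with $h(X_T^{c,\pi,\xi}-G)+\int_0^T\big(f(c_s,V_s^{c,\xi})+\tfrac{1}{2\Phi}\|\xi_s\|^2(1-\gamma)V_s^{c,\xi}\big)\mathrm{d}s$ will yield the inf-sup inequality characterising Problem~\ref{Mainpb_AAI}. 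Evaluating $M^{\widehat{c},\widehat{\pi},\widehat{\xi}}$ at $t=0$ gives $\mathcal{V}^{AAI}=\tfrac{1}{1-\gamma}(\widetilde{x}\varphi_0)^{1-\gamma}=\tfrac{1}{1-\gamma}\widetilde{x}^{1-\gamma}$, and substituting the formula for $\widetilde{x}$ from Proposition~\ref{Existence result on the FBSDE} produces \eqref{Value function_Example}. The hard part will be the uniform-integrability bookkeeping in this upgrade step: because $\gamma>1$, controlling exponential moments of the $\pi$-driven and $Z$-driven stochastic integrals under both $\mathbb{P}$ and the family $\{\mathbb{Q}^{\xi}\}$ along a localising sequence is the delicate but standard technical core of the argument.
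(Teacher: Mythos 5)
Your outline reproduces the paper's overall verification scheme (explicit candidates from the FBSDE of Proposition~\ref{Existence result on the FBSDE}, admissibility as in Lemma~\ref{Admissiblity of the candidate controls}, martingale optimality, and $\mathcal{V}^{AAI}=\frac{1}{1-\gamma}\widetilde{x}^{1-\gamma}$ since $\varphi_{0}=1$), but it has one genuine gap at the decisive step. You propose to ``take expectations between $0$ and $T$ and identify the terminal value of $M^{c,\pi,\xi}$ with $h(X_{T}^{c,\pi,\xi}-G)+\int_{0}^{T}\big(f(c_{s},V_{s}^{c,\xi})+\tfrac{1}{2\Phi}\|\xi_{s}\|^{2}(1-\gamma)V_{s}^{c,\xi}\big)\mathrm{d}s$''. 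That identification is not valid: by \eqref{Optimal martingale}, the running term inside $M^{c,\pi,\xi}$ is the aggregator evaluated along the \emph{ansatz} $e^{-\delta\theta s}\frac{(X_{s}+e^{rs}Y_{s})^{1-\gamma}}{1-\gamma}$, not along the true utility $V_{s}^{c,\xi}$, and for a generic admissible $(c,\pi,\xi)$ these differ. Because the Epstein--Zin utility is defined through the BSDE \eqref{Epstein-Zin utility_Maenhout's style_Integral form} rather than as a plain expectation (this is exactly where the argument departs from the time-additive Hu--Imkeller--M\"uller setting), the supermartingale property of $M^{c,\pi,\xi}$ only says that the ansatz process is a \emph{super-solution} of the utility BSDE. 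To deduce $V_{0}^{c,\xi}\le\frac{(x+Y_{0})^{1-\gamma}}{1-\gamma}$ you need a comparison result for this non-Lipschitz generator ($\theta<0$) augmented by the entropy-penalty term, applied to class-(D) super-/sub-solutions; this is precisely the paper's Lemma~\ref{Comparaison theorem} (adapted from the proof of Prop.~2.2 in \cite{xing2017consumption}) feeding into Lemma~\ref{Upper bound_Value function}, and it is absent from your plan.

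Relatedly, you locate the ``delicate technical core'' in uniform-integrability bookkeeping along a localising sequence. In the paper that is not where the work is: the upper bound is obtained via the comparison lemma (class (D) replaces UI estimates on $M$), and the attainment at $(\widehat{c},\widehat{\pi},\widehat{\xi})$ is obtained not by upgrading $M^{\widehat{c},\widehat{\pi},\widehat{\xi}}$ through localisation but by exhibiting the explicit auxiliary process $\widetilde{M}$ of Lemma~\ref{Auxiliary martingale} as a Dol\'eans-Dade exponential with constant coefficients under $\mathbb{Q}^{\widehat{\xi}}$, hence a true martingale, which (together with uniqueness for the utility BSDE) shows the ansatz coincides with $V^{\widehat{c},\widehat{\xi}}$ and the bound of Lemma~\ref{Upper bound_Value function} is attained. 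Once you insert the comparison step, the rest of your computations (inversion of $\Sigma$ to get the componentwise formulas, the admissibility moment estimates, and the evaluation of the value function at $\widetilde{x}$) match the paper's proof.
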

\begin{remark}
	Note, from \eqref{Optimal strategy_AAI} and the definition of the vector $\eta$ just below \eqref{Wealth_Ambiguity-neutral0}, that the robust optimal reinsurance strategy (RORS) depends on the parameters of the financial market. Similarly, the parameters of the insurance market impact both the robust optimal consumption strategy (ROCS) and the robust optimal investment strategy (ROIS). This co-dependence happens even if we assume no correlation (meaning, $\rho^{S}=0$) between the insurance market and the financial market.
\end{remark}
We state four preliminaries results, Lemma \ref{Admissiblity of the candidate controls}, \ref{Comparaison theorem}, \ref{Upper bound_Value function} and \ref{Auxiliary martingale}, from which the proof of Theorem \ref{Mainresult_AAI} will follow (see Appendix \ref{Mainresult_AAI_Proof}). Lemma \ref{Admissiblity of the candidate controls} confirms that the candidate controls are admissible and their optimality is shown via Lemma \ref{Upper bound_Value function}.
\begin{lemm}\label{Admissiblity of the candidate controls}
	Recall $(\widehat{X},Y)$ given by \eqref{Solution to the FBSDE_AAI}. Let Assumption \ref{Well-definedness of the reinsurance strategy} holds. Then
	\begin{itemize}
		\item[$(i)$] $\widehat{X}_{t}+e^{rt}Y_{t}>0$ for all $t\in[0,T]$.
		\item[$(ii)$] $(\widehat{c},\widehat{\xi})\in\mathcal{A}_{a}$ and $(\widehat{X}+e^{rt}Y)^{1-\gamma}$ is of class (D) under $\mathbb{P}$.
	\end{itemize}
\end{lemm}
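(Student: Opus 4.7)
The plan is to exploit the explicit closed-form solution in \eqref{Solution to the FBSDE_AAI} together with the fact that the auxiliary process $\varphi$ in \eqref{Auxiliary process_AAI} is (up to a deterministic exponential factor) a geometric Brownian motion. For part~(i), the first line of \eqref{Solution to the FBSDE_AAI} reads $\widehat{X}_t = \widetilde{x}\varphi_t - e^{rt}Y_t$, so that $\widehat{X}_t + e^{rt}Y_t = \widetilde{x}\varphi_t$; since $\widetilde{x}>0$ by Assumption~\ref{Well-definedness of the reinsurance strategy} and $\varphi_t>0$ as an exponential, positivity is immediate.

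For part~(ii), I would verify separately the conditions defining $\mathcal{A}_a$ and the class~(D) property. First, $\widehat{\xi}_t=\frac{\Phi}{\gamma+\Phi}\eta$ is a deterministic constant vector, so $\int_0^T\|\widehat{\xi}_s\|^2\diff s$ is a deterministic constant; consequently $\widehat{\xi}\in\Xi_{\mathbb{Q}^{\widehat{\xi}}}^{2}$ trivially and the factor $e^{\int_0^T\frac{1}{2\Phi}\|\widehat{\xi}_s\|^2\diff s}$ reduces to a harmless constant. Next, $\widehat{c}=\delta^{\psi}\widetilde{x}\varphi$ is non-negative and progressively measurable, and under the Girsanov change $\mathbb{P}\to\mathbb{Q}^{\widehat{\xi}}$ the Brownian motion $B$ merely acquires the drift $-\widehat{\xi}t$, so $\varphi$ remains an exponential of an affine function of a Brownian motion under $\mathbb{Q}^{\widehat{\xi}}$. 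The Gaussian moment $\mathbb{E}^{\mathbb{Q}^{\widehat{\xi}}}[\varphi_s^{p}]$ is therefore finite and continuous in $s\in[0,T]$ for every $p\in\mathbb{R}$, from which the two integrability conditions in \eqref{Admissible consumption and distortion processes}—one with $p=1-1/\psi$ integrated over $s$, the other with $p=1-\gamma$ evaluated at $s=T$—follow after multiplication by the constants identified above.

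Finally, to obtain class~(D) for $(\widehat{X}_t + e^{rt} Y_t)^{1-\gamma}=\widetilde{x}^{1-\gamma}\varphi_t^{1-\gamma}$ under $\mathbb{P}$, I would bound its supremum in expectation. Writing $\varphi_t^{1-\gamma}$ as a deterministic exponential factor, bounded on $[0,T]$, times $\exp\bigl(\tfrac{1-\gamma}{\gamma+\Phi}\eta^\intercal B_t\bigr)$, reduces the task to controlling $\mathbb{E}^{\mathbb{P}}\bigl[\sup_{t\in[0,T]}\exp(\lambda\eta^\intercal B_t)\bigr]$ for $\lambda=\tfrac{1-\gamma}{\gamma+\Phi}$. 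Since $\eta^\intercal B$ is, up to the time-change $t\mapsto\|\eta\|^{2}t$, a one-dimensional Brownian motion, I would apply Doob's $L^{2}$ inequality to the true exponential martingale $\exp(\lambda\eta^\intercal B_t - \tfrac{\lambda^{2}\|\eta\|^{2}}{2}t)$ to conclude that the supremum's expectation is finite, which forces class~(D).

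The main source of bookkeeping subtlety I anticipate is the interplay of measures: the two conditions in $\mathcal{A}_a$ live under $\mathbb{Q}^{\widehat{\xi}}$ while class~(D) is required under $\mathbb{P}$. Because $\widehat{\xi}$ is constant, however, the Girsanov density is a genuine exponential martingale driven by a bounded integrand, so log-normal moments of $\varphi$ remain finite under either measure and the two settings are interchangeable; in consequence, no step of the argument goes beyond routine Gaussian estimates on the single geometric Brownian motion $\varphi$, and the lemma reduces to careful application of the explicit formula~\eqref{Solution to the FBSDE_AAI} combined with Assumption~\ref{Well-definedness of the reinsurance strategy}.
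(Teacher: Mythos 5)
Your proposal is correct and follows essentially the same route as the paper: both rest on the identity $\widehat{X}_{t}+e^{rt}Y_{t}=\widetilde{x}\varphi_{t}$ from \eqref{Solution to the FBSDE_AAI} for positivity, on log-normal moment bounds for the two integrability conditions in \eqref{Admissible consumption and distortion processes}, and on the fact that $\varphi^{1-\gamma}$ is a bounded deterministic factor times an exponential $\mathbb{P}$-martingale for the class~(D) property. The only differences are cosmetic: the paper checks the $\mathcal{A}_a$ conditions by an explicit Girsanov change back to $\mathbb{P}$ with Cauchy--Schwarz/Jensen and exact Dol\'eans--Dade martingales, and gets class~(D) directly from the martingale property, whereas you argue via finiteness of Gaussian moments under $\mathbb{Q}^{\widehat{\xi}}$ and Doob's maximal inequality—both executions are valid.
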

\begin{proof}
	See Appendix \ref{Mainresult_AAI_Proof}.
\end{proof}
\begin{lemm}\label{Comparaison theorem}
	Let $(Y,Z)$ (respectively, $(\widetilde{Y},\widetilde{Z})$) be a super-solution (respectively, sub-solution) to \eqref{Epstein-Zin utility_Maenhout's style_Integral form}. That is,
	\begin{align*}
		&Y+\int_{0}^{\cdot}\Big(f(c_{s},Y_{s})+\frac{1}{2\Phi}\|\xi_{s}\|^{2}(1-\gamma)Y_{s}\Big)\mathrm{d}s\text{ is a local super-martingale and}\\
		&\widetilde{Y}+\int_{0}^{\cdot}\Big(f(c_{s},\widetilde{Y}_{s})+\frac{1}{2\Phi}\|\xi_{s}\|^{2}(1-\gamma)\widetilde{Y}_{s}\Big)\mathrm{d}s\text{ is a local sub-martingale}
	\end{align*}
	with $Y_{T}\ge e^{-\delta\theta T}\frac{c_{T}^{1-\gamma}}{1-\gamma}\ge\tilde{Y}_{T}$, where $Z$ and $\tilde{Z}$ are determined by the Doob–Meyer decomposition	and martingale representation. Assume that both $Y$ and $\tilde{Y}$ are of class (D). Then $Y_{t}\ge\tilde{Y}_{t}$ for $t\in[0,T]$. Moreover, if $Y_{T}>\tilde{Y}_{T}$, then $Y_{t}>\tilde{Y}_{t}$ for $t\in[0,T]$.
\end{lemm}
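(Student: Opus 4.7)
The plan is to reduce the comparison to a linear-BSDE-style argument via an integrating-factor weighting of $\Delta Y := Y - \tilde Y$. Subtracting the local sub-martingale property of $\tilde Y$ from the local super-martingale property of $Y$ in \eqref{Epstein-Zin utility_Maenhout's style_Integral form} gives that
\[
\Delta Y + \int_0^{\cdot} \Bigl[ \bigl(f(c_s, Y_s) - f(c_s, \tilde Y_s)\bigr) + \tfrac{1-\gamma}{2\Phi}\|\xi_s\|^2 \Delta Y_s \Bigr] \diff s
\]
is a local super-martingale on $[0,T]$. I would linearize the nonlinear term in the usual way by setting
\[
\alpha_t := \int_0^1 \partial_v f\bigl(c_t, \tilde Y_t + \lambda \Delta Y_t\bigr) \diff \lambda, \qquad \beta_t := \alpha_t + \tfrac{1-\gamma}{2\Phi}\|\xi_t\|^2,
\]
so that $f(c_t, Y_t) - f(c_t, \tilde Y_t) = \alpha_t \Delta Y_t$ and the display above becomes $\Delta Y + \int_0^{\cdot} \beta_s \Delta Y_s \diff s$, still a local super-martingale.

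Next I would introduce the integrating factor $E_t := \exp\bigl(\int_0^t \beta_s \diff s\bigr)$, a finite-variation process. Integration by parts yields $\diff(E_t \Delta Y_t) = E_t \diff(\Delta Y_t) + \beta_t E_t \Delta Y_t \diff t$, which by the preceding step is the differential of a local super-martingale. The key sign analysis exploits $\gamma, \psi > 1$: a direct differentiation of \eqref{Epstein-Zin generator} gives, for $c > 0$ and $v < 0$,
\[
\partial_v f(c,v) = \delta e^{-\delta t}\frac{c^{1-1/\psi}}{1-1/\psi}\Bigl(1-\tfrac{1}{\theta}\Bigr) \bigl((1-\gamma)v\bigr)^{-1/\theta}(1-\gamma) \le 0,
\]
because the first three factors are positive (since $\psi > 1$ gives $1-1/\psi > 0$, and $\theta < 0$ gives $1-1/\theta > 0$) while $(1-\gamma) < 0$. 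Hence $\alpha_t \le 0$; together with $(1-\gamma)/(2\Phi) < 0$ this gives $\beta_t \le 0$, and therefore $0 < E_t \le 1$.

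The uniform bound $E_t \le 1$ is the crux: since $Y$ and $\tilde Y$ are of class (D), so is $\Delta Y$, and $|E \Delta Y| \le |\Delta Y|$ shows that $E \Delta Y$ is also of class (D). A local super-martingale of class (D) is a true super-martingale, so
\[
E_t \Delta Y_t \ge \mathbb{E}\bigl[E_T \Delta Y_T \mid \mathcal{F}_t\bigr] \ge 0 \quad \text{a.s.},
\]
because $E_T > 0$ and $\Delta Y_T \ge 0$. Dividing by $E_t > 0$ gives $Y_t \ge \tilde Y_t$. For the strict version, if $\Delta Y_T > 0$ a.s., then $E_T \Delta Y_T > 0$ a.s., and the conditional expectation of an a.s.\ strictly positive integrable variable is itself a.s.\ strictly positive, so $Y_t > \tilde Y_t$.

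The main obstacle I anticipate is the integrability of the linearization coefficient. Since $\partial_v f(c, v)$ blows up as $v \to 0^-$, the process $\alpha$ might fail to be pathwise integrable if $Y$ or $\tilde Y$ approaches zero. I would address this via localisation: stop at $\tau_n := \inf\{t \in [0,T] : \max(Y_t, \tilde Y_t) > -1/n\} \wedge T$, establish the super-martingale conclusion on $[0, \tau_n]$, and pass to the limit $n \to \infty$ using the class (D) hypothesis together with the exponential integrability of $\|\xi\|^2$ built into the admissible set $\mathcal{A}_a$ to control the boundary behaviour.
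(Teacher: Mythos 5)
Your main line of argument is sound and is, in substance, the same comparison mechanism the paper invokes by simply citing Step 3 of the proof of Proposition 2.2 in Xing (2017) with the generator augmented by $\frac{1-\gamma}{2\Phi}\|\xi\|^{2}y$: the difference of a super- and a sub-solution, linearised through $\alpha_{t}=\int_{0}^{1}\partial_{v}f(c_{t},\tilde Y_{t}+\lambda\Delta Y_{t})\diff\lambda\le0$, multiplied by the integrating factor $E_{t}=\exp(\int_{0}^{t}\beta_{s}\diff s)\in(0,1]$, is a local supermartingale of class (D), hence a true supermartingale, and the terminal sign propagates backwards; the strict-inequality part follows as you say. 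The only real difference from the paper is that your argument is self-contained rather than a reference, and it uses explicitly that the (augmented) generator does not involve $z$, so no measure change is needed; your sign computation for $\partial_{v}f$ and the class-(D)-implies-true-supermartingale step are correct.

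The flaw is in your final paragraph. In the regime $\gamma,\psi>1$ we have $\theta<0$, hence $-1/\theta>0$ and $\partial_{v}f(c,v)=\delta e^{-\delta t}\frac{c^{1-1/\psi}}{1-1/\psi}\bigl(1-\tfrac{1}{\theta}\bigr)(1-\gamma)\bigl((1-\gamma)v\bigr)^{-1/\theta}$ tends to $0$, not to infinity, as $v\to0^{-}$; the blow-up is as $v\to-\infty$, and the coefficient is scaled by $c_{t}^{1-1/\psi}$. Consequently the stopping times $\tau_{n}=\inf\{t:\max(Y_{t},\tilde Y_{t})>-1/n\}\wedge T$ guard the wrong boundary, and as a localisation scheme they would not close the argument in any case: on $\{\tau_{n}<T\}$ you have no sign information on $\Delta Y_{\tau_{n}}$, and $\tau_{n}$ need not increase to $T$, so the stopped supermartingale inequality cannot be passed to the limit. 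What is actually needed, and suffices, is pathwise integrability of $\beta$: $\int_{0}^{T}\|\xi_{s}\|^{2}\diff s<\infty$ a.s.\ (since $\xi\in\Xi^{2}$), $\int_{0}^{T}e^{-\delta s}c_{s}^{1-1/\psi}\diff s<\infty$ a.s.\ (admissibility of $c$), and the fact that the c\`adl\`ag class-(D) processes $Y,\tilde Y$ are pathwise bounded on $[0,T]$, so that $\bigl((1-\gamma)v\bigr)^{-1/\theta}$ along the linearisation segment is pathwise bounded. Then $E$ is well defined, strictly positive and $\le1$, $\int_{0}^{\cdot}E_{s}\diff N_{s}$ (with $N$ your local supermartingale and its own localising sequence) is again a local supermartingale, and your class-(D) argument finishes the proof, all of it under $\mathbb{Q}^{\xi}$, the measure under which the sub/super-martingale and class-(D) properties are meant.
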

\begin{proof}
	See Appendix \ref{Mainresult_AAI_Proof}.
\end{proof}
\begin{lemm}\label{Upper bound_Value function}
	Let Assumption \ref{Well-definedness of the reinsurance strategy} holds. Then for any triple $(c,\pi,\xi)$ of admissible strategy we have
	\begin{align}\label{Upper bound_Value function1}
		\frac{(x+Y_{0})^{1-\gamma}}{1-\gamma}\ge V_{0}^{c,\xi},
	\end{align}
	with $c$ financed by $\pi$ via \eqref{Wealth_Ambiguity-averse}, $V^{c,\xi}$ defined in Proposition \ref{Non-empty control set} and $Y$ given in \eqref{Solution to the FBSDE_AAI}.
\end{lemm}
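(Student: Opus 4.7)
The plan is to invoke the comparison theorem \ref{Comparaison theorem} with the ansatz $\widetilde V_{t}:=e^{-\delta\theta t}\frac{(X_{t}^{c,\pi,\xi}+e^{rt}Y_{t})^{1-\gamma}}{1-\gamma}$ cast as a super-solution of the Epstein--Zin BSDE \eqref{Epstein-Zin utility_Maenhout's style_Integral form} and $V^{c,\xi}$ cast as the corresponding solution, with $(Y,Z)$ from \eqref{Solution to the FBSDE_AAI}. Reading off the resulting inequality $\widetilde V_{0}\ge V_{0}^{c,\xi}$ at $t=0$, with $x+Y_{0}$ identified as the base of $\widetilde V_{0}$ modulo the deterministic shift absorbed in \eqref{Self-financing wealth}, delivers \eqref{Upper bound_Value function1}.

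The terminal value and the integrability hypothesis of Lemma \ref{Comparaison theorem} come directly from Definition \ref{Admissible strategies}: condition~1 forces $c_{T}=X_{T}^{c,\pi,\xi}+e^{rT}Y_{T}$, hence $\widetilde V_{T}=h(c_{T})$ matches the terminal of \eqref{Epstein-Zin utility_Maenhout's style_Integral form}; condition~2 keeps $\widetilde V$ well-defined and strictly negative (using $\gamma>1$); condition~3 combined with $\gamma>1$ supplies the class (D) property.

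The substantive step is the super-solution (drift) inequality. I apply It{\^o}'s formula to $\widetilde V$ under $\mathbb{Q}^{\xi}$, using \eqref{Wealth_Ambiguity-averse} for $\mathrm{d}X^{c,\pi,\xi}$ and \eqref{Auxiliary BSDE_AAI} with the explicit generator \eqref{Auxiliary BSDE generator_AAI} for $\mathrm{d}Y$, and then add the BSDE driver $f(c,\widetilde V)+\frac{1}{2\Phi}\|\xi\|^{2}(1-\gamma)\widetilde V$. Reproducing the completion of squares already performed in \eqref{OptEZ_Differential_AAI}, the drift collapses, up to the strictly positive factor $e^{-\delta\theta t}\Xi^{-\gamma}$ with $\Xi:=X^{c,\pi,\xi}+e^{rt}Y$, into the three blocks attached to the inner optimisations of \eqref{YDrift_AAI}: the concavity gap $\bigl(-c_{t}+\tfrac{\delta c_{t}^{1-1/\psi}}{1-1/\psi}\Xi^{1/\psi}-\tfrac{\delta^{\psi}}{\psi-1}\Xi\bigr)\le 0$, attained at $\widehat c_{t}=\delta^{\psi}\Xi$; the investment-reinsurance square $-\tfrac{\gamma+\Phi}{2\Xi}\bigl\|\pi_{t}+e^{rt}Z_{t}-\tfrac{\Xi}{\gamma+\Phi}\eta\bigr\|^{2}\le 0$; and the Maenhout ambiguity square $\tfrac{\Xi}{2\Phi}\bigl\|\xi_{t}-\Phi\Xi^{-1}(\pi_{t}+e^{rt}Z_{t})\bigr\|^{2}\ge 0$, which vanishes at the pointwise worst-case distortion $\widehat\xi^{\,c,\pi}_{t}:=\Phi\Xi^{-1}(\pi_{t}+e^{rt}Z_{t})$. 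Because the infimum over $\xi$ defining $V_{0}^{c,\xi}$ in Problem \ref{Mainpb_AAI} is realised precisely at $\widehat\xi^{\,c,\pi}$, the ambiguity block is absorbed into that infimum, the combined drift becomes non-positive, and Lemma~\ref{Comparaison theorem} then delivers $\widetilde V_{0}\ge V_{0}^{c,\xi}$.

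The main obstacle is controlling the ambiguity block, which is generically positive for $\xi\ne\widehat\xi^{\,c,\pi}$ and would, taken naively pointwise in $\xi$, break the super-solution property. The resolution relies on two features of the homothetic normalisation \eqref{Maenhout ambiguity function}: first, it makes $\widehat\xi^{\,c,\pi}$ depend linearly on $\pi_{t}+e^{rt}Z_{t}$, so that the completed square in $\xi$ mirrors the Girsanov correction $-\pi_{t}^{\intercal}\xi_{t}\,\mathrm{d}t$ in \eqref{Wealth_Ambiguity-averse}; second, it identifies this $\widehat\xi^{\,c,\pi}$ with the inner infimum of the Hamiltonian defining $\mathcal H$, so that the robust completion of squares closes to the classical non-robust super-solution argument of \cite{xing2017consumption,kk2025optimal}. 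Positivity of $\Xi$ granted by Lemma \ref{Admissiblity of the candidate controls} is needed throughout to preserve the sign of the weighting factor $e^{-\delta\theta t}\Xi^{-\gamma}$ and thus the direction of the inequality.
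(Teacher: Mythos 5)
Your route---casting $\widetilde V_t=e^{-\delta\theta t}\frac{(X_t+e^{rt}Y_t)^{1-\gamma}}{1-\gamma}$ as a supersolution of \eqref{Epstein-Zin utility_Maenhout's style_Integral form} and invoking Lemma~\ref{Comparaison theorem}, with the terminal matching and class~(D) properties taken from Definition~\ref{Admissible strategies}---is the same as the paper's, and your decomposition of the drift into the consumption gap, the investment--reinsurance square and the Maenhout square is the correct bookkeeping behind \eqref{OptEZ_Differential_AAI}--\eqref{YDrift_AAI}. The gap is in the step where you dispose of the nonnegative ambiguity block. The object $V^{c,\xi}$ in the statement is the solution of the BSDE \eqref{Epstein-Zin utility_Maenhout's style_Integral form} for the \emph{given, fixed} distortion $\xi$ (Proposition~\ref{Non-empty control set}); it contains no inner infimum, so nothing can be ``absorbed into the infimum'' of Problem~\ref{Mainpb_AAI}. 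To apply Lemma~\ref{Comparaison theorem} you must show that $\widetilde V$ is a supersolution of the BSDE driven by that same $\xi$, i.e.\ that the \emph{entire} drift, including the term $\frac{\Xi_t}{2\Phi}\big\|\xi_t-\Phi\Xi_t^{-1}(\pi_t+e^{rt}Z_t)\big\|^{2}$ with $\Xi_t:=X_t+e^{rt}Y_t$, is nonpositive. It is not: taking $c=\widehat c$ so that the consumption gap vanishes and writing $q_t:=\pi_t+e^{rt}Z_t$, the sum of the two squares equals $-\frac{\gamma}{2\Xi_t}\|q_t\|^{2}+q_t^{\intercal}(\eta-\xi_t)+\frac{\Xi_t}{2\Phi}\|\xi_t\|^{2}-\frac{\Xi_t}{2(\gamma+\Phi)}\|\eta\|^{2}$, whose maximum over $q_t$ is $\frac{\Xi_t}{2\gamma}\|\eta-\xi_t\|^{2}+\frac{\Xi_t}{2\Phi}\|\xi_t\|^{2}-\frac{\Xi_t}{2(\gamma+\Phi)}\|\eta\|^{2}$; at $\xi\equiv 0$ (an admissible distortion) this is strictly positive whenever $\Phi>0$ and $\eta\neq0$. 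So for distortions away from $\Phi\Xi^{-1}(\pi+e^{rt}Z)$ the supersolution property genuinely fails, and the sentence ``the combined drift becomes non-positive'' is not justified by anything you wrote.

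What your computation does establish is the bound with $\xi$ replaced by the pointwise minimiser $\widehat\xi^{\,c,\pi}_t=\Phi\Xi_t^{-1}(\pi_t+e^{rt}Z_t)$, namely $\widetilde V_0\ge V_0^{c,\widehat\xi^{\,c,\pi}}\ge\inf_{\xi}V_0^{c,\xi}$, which is the inequality actually used in the verification of Theorem~\ref{Mainresult_AAI}. Note that the paper's own proof is terse at exactly this point: it asserts that $M^{c,\pi,\xi}$ is a local supermartingale directly from \eqref{OptEZ_Differential_AAI} and \eqref{YDrift_AAI}, which again only holds once the ambiguity square is suppressed (i.e.\ at the minimising distortion), not for every admissible $\xi$. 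To repair your write-up you should either restrict the claimed inequality to $\widehat\xi^{\,c,\pi}$ (equivalently state it for $\inf_\xi V_0^{c,\xi}$) and check that this distortion is admissible, or provide a different argument; invoking the inner infimum of Problem~\ref{Mainpb_AAI} while comparing against the fixed-$\xi$ solution $V^{c,\xi}$ is not a proof.
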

\begin{proof}
	See Appendix \ref{Mainresult_AAI_Proof}.
\end{proof}
\begin{lemm}\label{Auxiliary martingale}
	Let $\widetilde{M}$ be the process defined by
	\begin{align}\label{Auxiliary martingale process}
		\widetilde{M}_{t}:=\exp\Big(\int_{0}^{t}\big(\delta^{\psi}\theta+\frac{\Phi(1-\gamma)}{2(\gamma+\Phi)^{2}}\|\eta\|^{2}e^{\delta\theta s}\big)\mathrm{d}s\Big)e^{-\delta\theta t} \frac{(\widehat{X}_{t}+e^{rt}Y_{t})^{1-\gamma}}{1-\gamma},~0\le t\le T,
	\end{align}
	with $(\widehat{X},Y)$ as in Proposition~\ref{Existence result on the FBSDE}. Then the process $\widetilde{M}$ is a martingale under $\mathbb{Q}^{\widehat{\xi}}$.
\end{lemm}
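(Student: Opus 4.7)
\smallskip
\noindent\textbf{Proof proposal.} The plan is to exploit the fact that, thanks to Proposition~\ref{Existence result on the FBSDE}, the sum $\widehat{X}_{t}+e^{rt}Y_{t}$ has a fully explicit representation, namely $\widehat{X}_{t}+e^{rt}Y_{t}=\widetilde{x}\,\varphi_{t}$, with $\varphi$ given by \eqref{Auxiliary process_AAI}. Thus $(\widehat{X}_{t}+e^{rt}Y_{t})^{1-\gamma}=\widetilde{x}^{\,1-\gamma}\varphi_{t}^{\,1-\gamma}$ is a genuine geometric Brownian motion under $\mathbb{P}$, and the whole question reduces to verifying that, after multiplication by the deterministic factor
\[
K(t):=\exp\Big(\int_{0}^{t}\big(\delta^{\psi}\theta+\tfrac{\Phi(1-\gamma)}{2(\gamma+\Phi)^{2}}\|\eta\|^{2}e^{\delta\theta s}\big)\mathrm{d}s\Big)e^{-\delta\theta t},
\]
the resulting process has zero drift under $\mathbb{Q}^{\widehat{\xi}}$.

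First, I would apply It{\^o}'s formula to $\varphi_{t}^{\,1-\gamma}$ under $\mathbb{P}$. Since $\varphi_{t}=\exp(At+\tfrac{1}{\gamma+\Phi}\eta^{\intercal}B_{t})$ for a constant $A$ read off from \eqref{Auxiliary process_AAI}, raising to the power $1-\gamma$ produces
\[
\mathrm{d}\varphi_{t}^{\,1-\gamma}/\varphi_{t}^{\,1-\gamma}=\Big((1-\gamma)A+\tfrac{(1-\gamma)^{2}}{2(\gamma+\Phi)^{2}}\|\eta\|^{2}\Big)\mathrm{d}t+\tfrac{1-\gamma}{\gamma+\Phi}\eta^{\intercal}\mathrm{d}B_{t}.
\]
Next I would apply the Girsanov transformation $\mathrm{d}B_{t}=\mathrm{d}B_{t}^{\mathbb{Q}^{\widehat{\xi}}}-\widehat{\xi}_{t}\,\mathrm{d}t$ with $\widehat{\xi}_{t}=\tfrac{\Phi}{\gamma+\Phi}\eta$ from \eqref{Candidate distortion process}, which adds the extra drift $-\tfrac{(1-\gamma)\Phi}{(\gamma+\Phi)^{2}}\|\eta\|^{2}$ to the previous one. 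Using the identity $\theta=\tfrac{(1-\gamma)\psi}{\psi-1}$ (so that the $\delta^{\psi}$-terms in $(1-\gamma)A$ regroup into $-\delta^{\psi}\theta+\delta\theta$), the $\|\eta\|^{2}$-coefficients telescope and leave a drift for $e^{-\delta\theta t}\varphi_{t}^{\,1-\gamma}/(1-\gamma)$ which is exactly cancelled by $K'(t)/K(t)$; this is precisely how $K(t)$ is engineered. The stochastic part is $\tfrac{1-\gamma}{\gamma+\Phi}\widetilde{M}_{t}\,\eta^{\intercal}\mathrm{d}B_{t}^{\mathbb{Q}^{\widehat{\xi}}}$, so $\widetilde{M}$ is a local martingale under $\mathbb{Q}^{\widehat{\xi}}$.

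It remains to upgrade local martingality to true martingality. I would argue as follows. Under $\mathbb{Q}^{\widehat{\xi}}$, the process $\eta^{\intercal}B_{t}^{\mathbb{Q}^{\widehat{\xi}}}$ is a Gaussian martingale on $[0,T]$ with variance $\|\eta\|^{2}t$, so $\varphi_{t}^{\,1-\gamma}$ (after the measure change) is a constant multiple of $\exp(\tfrac{1-\gamma}{\gamma+\Phi}\eta^{\intercal}B_{t}^{\mathbb{Q}^{\widehat{\xi}}})$ times a deterministic function. The deterministic factors $K(t)e^{-\delta\theta t}$ are bounded on $[0,T]$, and $\mathbb{E}^{\mathbb{Q}^{\widehat{\xi}}}\!\big[\exp(p\,\eta^{\intercal}B_{t}^{\mathbb{Q}^{\widehat{\xi}}})\big]<\infty$ for every $p\in\mathbb{R}$ on the compact horizon. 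Hence $\sup_{t\in[0,T]}\mathbb{E}^{\mathbb{Q}^{\widehat{\xi}}}[|\widetilde{M}_{t}|^{1+\varepsilon}]<\infty$ for some $\varepsilon>0$, which together with local martingality (and a standard localising sequence argument via the Burkholder–Davis–Gundy inequality) promotes $\widetilde{M}$ to a true martingale under $\mathbb{Q}^{\widehat{\xi}}$.

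The principal obstacle I expect is not the vanishing of the drift, which is a deterministic book-keeping check, but rather the measurability/integrability verification in the final step: because $1-\gamma<0$, one has to be careful that negative exponents of the Gaussian exponential still give integrable quantities and that Girsanov is applicable (the Dol{\'e}ans--Dade exponential in \eqref{Alternative probability measure} with the constant distortion $\widehat{\xi}=\tfrac{\Phi}{\gamma+\Phi}\eta$ satisfies Novikov's condition on $[0,T]$, which I would invoke explicitly). Once these two points are in place, the martingale property follows.
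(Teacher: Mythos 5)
Your proposal is correct in substance and lands at the same endpoint as the paper, but by a somewhat different derivation. The paper does not go back to the explicit formula $\widehat{X}_{t}+e^{rt}Y_{t}=\widetilde{x}\varphi_{t}$; instead it reuses the zero-drift property of the optimality process $M^{\widehat{c},\widehat{\pi},\widehat{\xi}}$ from \eqref{OptEZ_Differential_AAI}--\eqref{YDrift_AAI}, together with the identity $f\big(\widehat{c}_{t},e^{-\delta\theta t}\tfrac{(\widehat{X}_{t}+e^{rt}Y_{t})^{1-\gamma}}{1-\gamma}\big)+\tfrac{\Phi}{2(\gamma+\Phi)^{2}}\|\eta\|^{2}(\widehat{X}_{t}+e^{rt}Y_{t})^{1-\gamma}=\big(\delta^{\psi}\theta+\tfrac{\Phi(1-\gamma)}{2(\gamma+\Phi)^{2}}\|\eta\|^{2}e^{\delta\theta t}\big)e^{-\delta\theta t}\tfrac{(\widehat{X}_{t}+e^{rt}Y_{t})^{1-\gamma}}{1-\gamma}$, to read off a linear SDE for $\widetilde{M}$ and conclude directly that $\widetilde{M}_{t}=\widetilde{M}_{0}\,\mathcal{E}\big(\int\tfrac{1-\gamma}{\gamma+\Phi}\eta^{\intercal}\mathrm{d}B^{\mathbb{Q}^{\widehat{\xi}}}\big)_{t}$, a Dol\'eans--Dade exponential with constant integrand, hence a true martingale. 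Your route—It\^o on $\varphi_{t}^{1-\gamma}$, Girsanov with the constant $\widehat{\xi}=\tfrac{\Phi}{\gamma+\Phi}\eta$, and checking that $K(t)$ kills the drift—is a perfectly legitimate verification and is arguably more self-contained, since it uses only Proposition~\ref{Existence result on the FBSDE}; the paper's version buys a shorter computation by recycling \eqref{OptEZ_Differential_AAI}.

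One caveat on your final step: the principle ``continuous local martingale with $\sup_{t\le T}\mathbb{E}[|\widetilde{M}_{t}|^{1+\varepsilon}]<\infty$ at deterministic times is a true martingale'' is false in general (the inverse three-dimensional Bessel process is a strict local martingale with bounded second moments on compacts), so as phrased this upgrade is not airtight. It is also unnecessary here: once you have shown that the drift cancels and the diffusion term is $\tfrac{1-\gamma}{\gamma+\Phi}\widetilde{M}_{t}\,\eta^{\intercal}\mathrm{d}B_{t}^{\mathbb{Q}^{\widehat{\xi}}}$, you have exhibited $\widetilde{M}$ as the solution of a linear SDE with \emph{constant deterministic} coefficient, i.e.\ $\widetilde{M}$ equals $\widetilde{M}_{0}$ times a geometric Brownian motion under $\mathbb{Q}^{\widehat{\xi}}$, whose martingale property is immediate (Novikov trivially holds). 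Alternatively, your moment bound does suffice if you use second moments and argue via $\mathbb{E}^{\mathbb{Q}^{\widehat{\xi}}}\big[[\widetilde{M}]_{T}^{1/2}\big]\le c\big(\int_{0}^{T}\mathbb{E}^{\mathbb{Q}^{\widehat{\xi}}}[\widetilde{M}_{s}^{2}]\mathrm{d}s\big)^{1/2}<\infty$ and the Burkholder--Davis--Gundy inequality, which genuinely yields the martingale property; but the direct stochastic-exponential observation, which is exactly the paper's conclusion, is the cleanest closure.
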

\begin{proof}
	See Appendix \ref{Mainresult_AAI_Proof}.
\end{proof}

\section{Numerical simulations}\label{Numerical illustrations}
The goal of this section is to numerically illustrate the effects of model parameters on the optimal consumption, investment and reinsurance strategies, and the corresponding value function. We consider three cases: the no-correlation between insurance market and financial market case, the case of an ambiguity-neutral insurer (ANI) and the general case obtained in Theorem \ref{Mainresult_AAI}. For the numerical experiments, except otherwise stated, the basic parameters are chosen as those in Table \ref{table:Basic parameter values}.
\begin{table}[h!]
	\centering
	\begin{tabular}{c}
		\hline
		$\begin{array}{c}
			r \\
			0.05
		\end{array}$~$\begin{array}{c}
			\mu \\
			0.04
		\end{array}$~$\begin{array}{c}
			\sigma \\
			0.25
		\end{array}$~$\begin{array}{c}
			\delta \\
			0.08
		\end{array}$~$\begin{array}{c}
			\gamma \\
			5
		\end{array}$~$\begin{array}{c}
			\psi \\
			1.5
		\end{array}$~$\begin{array}{c}
			T \\
			10
		\end{array}$~$\begin{array}{c}
			x \\
			500
		\end{array}$\\
		\hline
		$\begin{array}{c}
			\kappa \\
			1.5
		\end{array}$~$\begin{array}{c}
			\zeta \\
			1
		\end{array}$~$\begin{array}{c}
			\beta \\
			1
		\end{array}$~$\begin{array}{c}
			\rho^{S} \\
			-\frac{1}{2}
		\end{array}$~$\begin{array}{c}
			\rho^{re} \\
			\frac{\sqrt{3}}{2}
		\end{array}$~$\begin{array}{c}
			\nu^{in} \\
			0.2
		\end{array}$~$\begin{array}{c}
			\nu^{re} \\
			0.5
		\end{array}$~$\begin{array}{c}
			\Phi \\
			2
		\end{array}$~$\begin{array}{c}
			G \\
			500
		\end{array}$ \\
		\hline
	\end{tabular}
	\caption{Values of model parameters.}
	\label{table:Basic parameter values}
\end{table}
\begin{figure}[h!]
	\centering
	\subfigure[]{
		\includegraphics[width=0.45\textwidth]{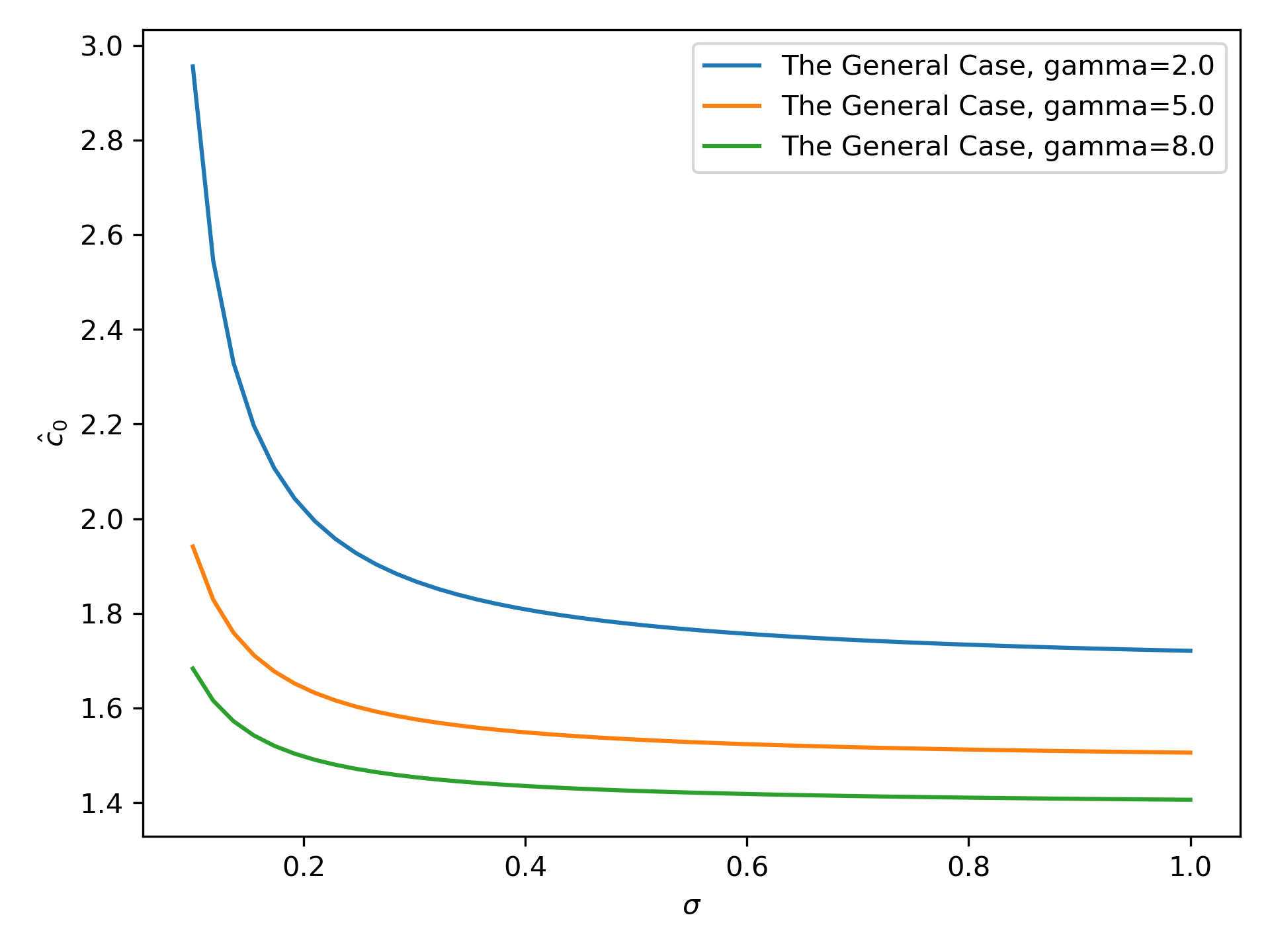}
		\label{fig:AAI consumption vs varying volatility with varying risk aversion}
	}
	\subfigure[]{
		\includegraphics[width=0.45\textwidth]{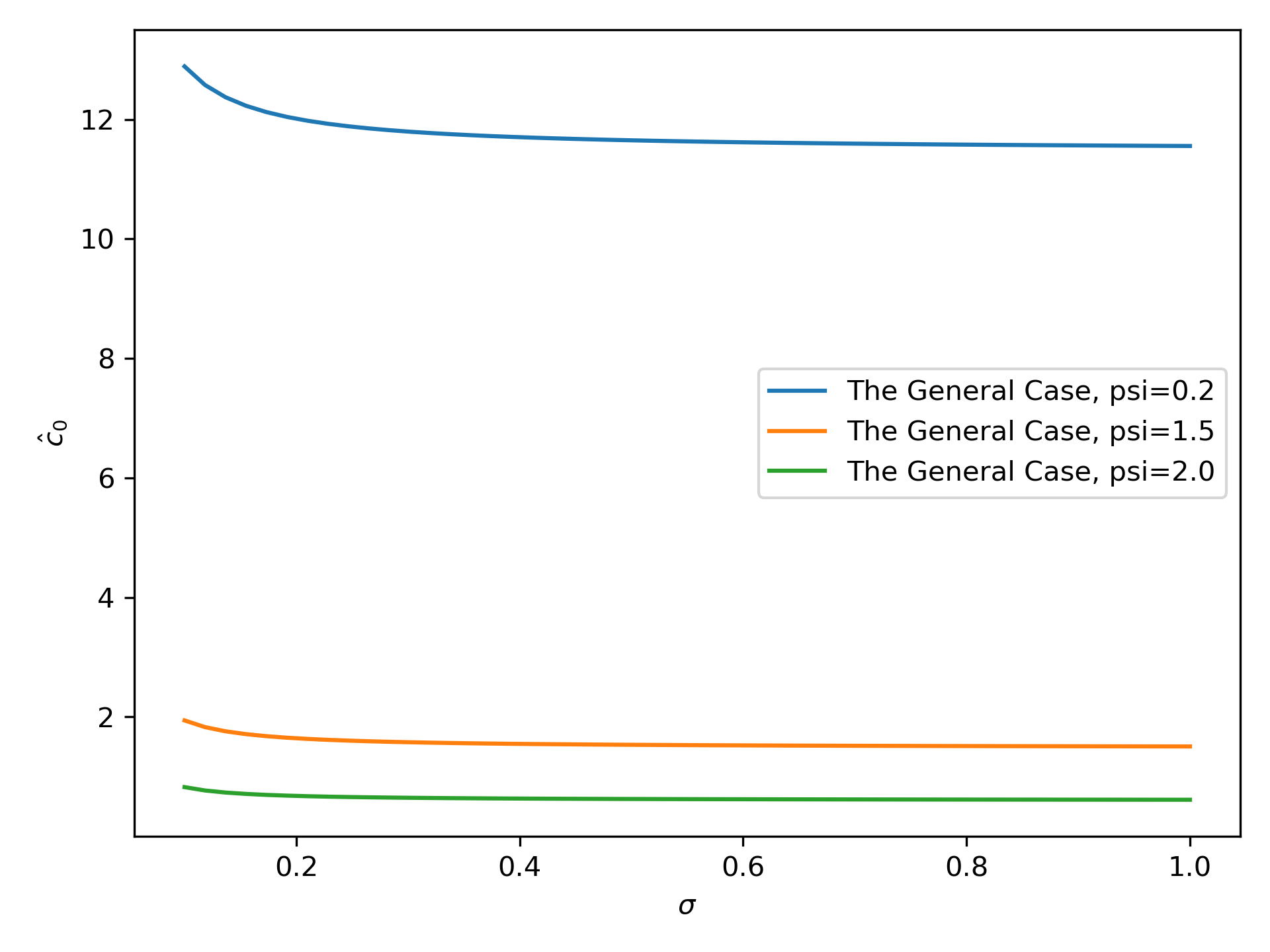}
		\label{fig:AAI consumption vs volatility with varying EIS}
	}
	\caption{The time-$0$ optimal consumption for an ambiguity-averse insurer with correlation between insurance market and financial market (general case). The left panel uses $\psi=1.5$, and the right panel takes $\gamma=5$.}
	\label{fig:AAI consumption strategy}
\end{figure}

In Figure \ref{fig:AAI consumption strategy} we display the time-$0$ robust optimal consumption strategy (ROCS) with respect to the volatility of the stock for different values of the risk aversion (see Figure \ref{fig:AAI consumption vs varying volatility with varying risk aversion}) and the EIS (see Figure \ref{fig:AAI consumption vs volatility with varying EIS}). We observe that the risk aversion coefficient and the EIS coefficient both negatively impact the consumption. In addition, Figure \ref{fig:AAI consumption vs varying volatility with varying risk aversion} shows that the ROCS is highly sensitive to small values of the volatility of the stock ($\sigma<0.2$) and barely influenced by its high values ($\sigma\ge0.4$).
\begin{figure}[h!]
	\centering
	\subfigure[]{
		\includegraphics[width=0.45\textwidth]{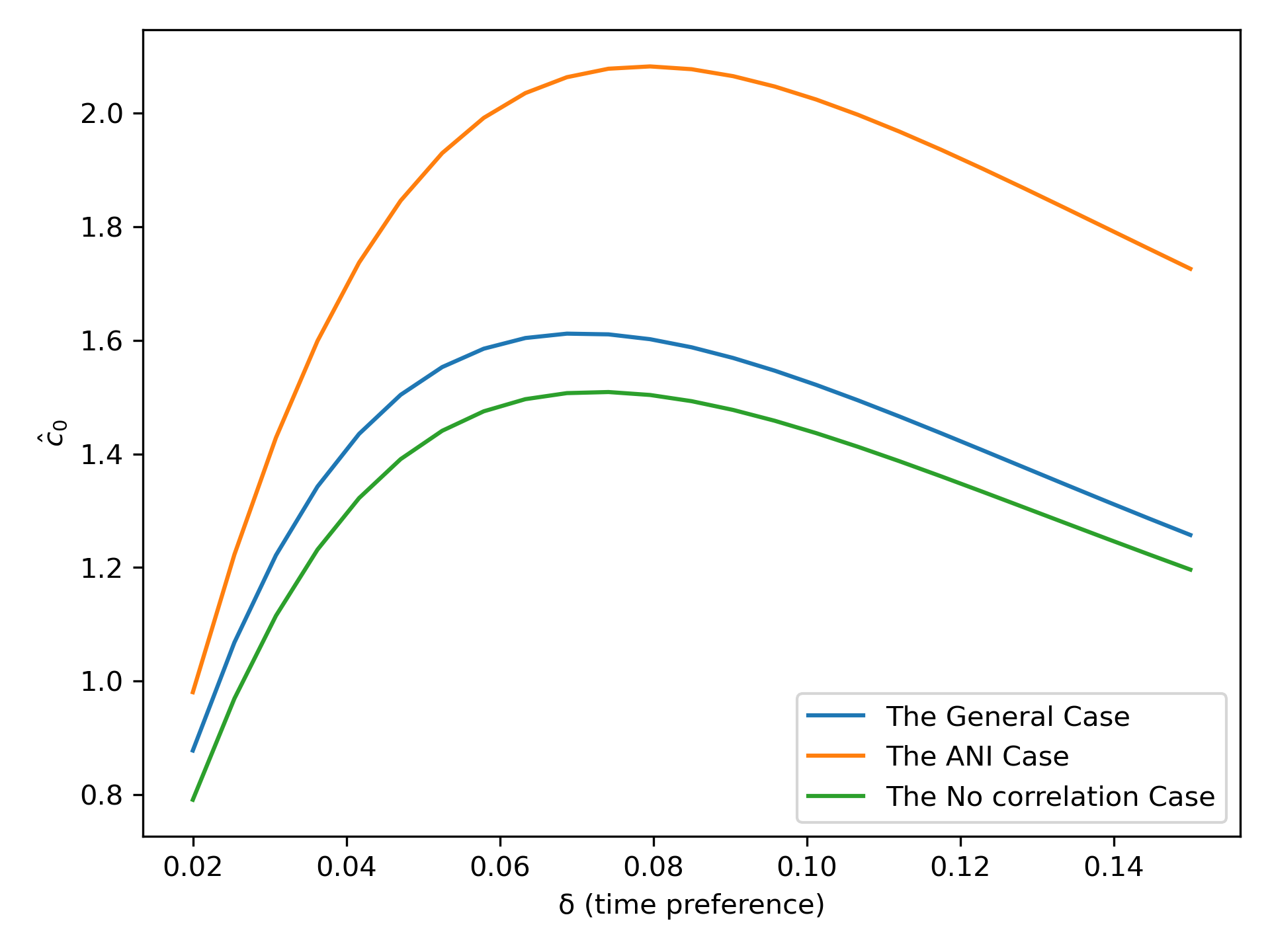}
		\label{fig:All consumptions vs discount factor}
	}
	\subfigure[]{
		\includegraphics[width=0.45\textwidth]{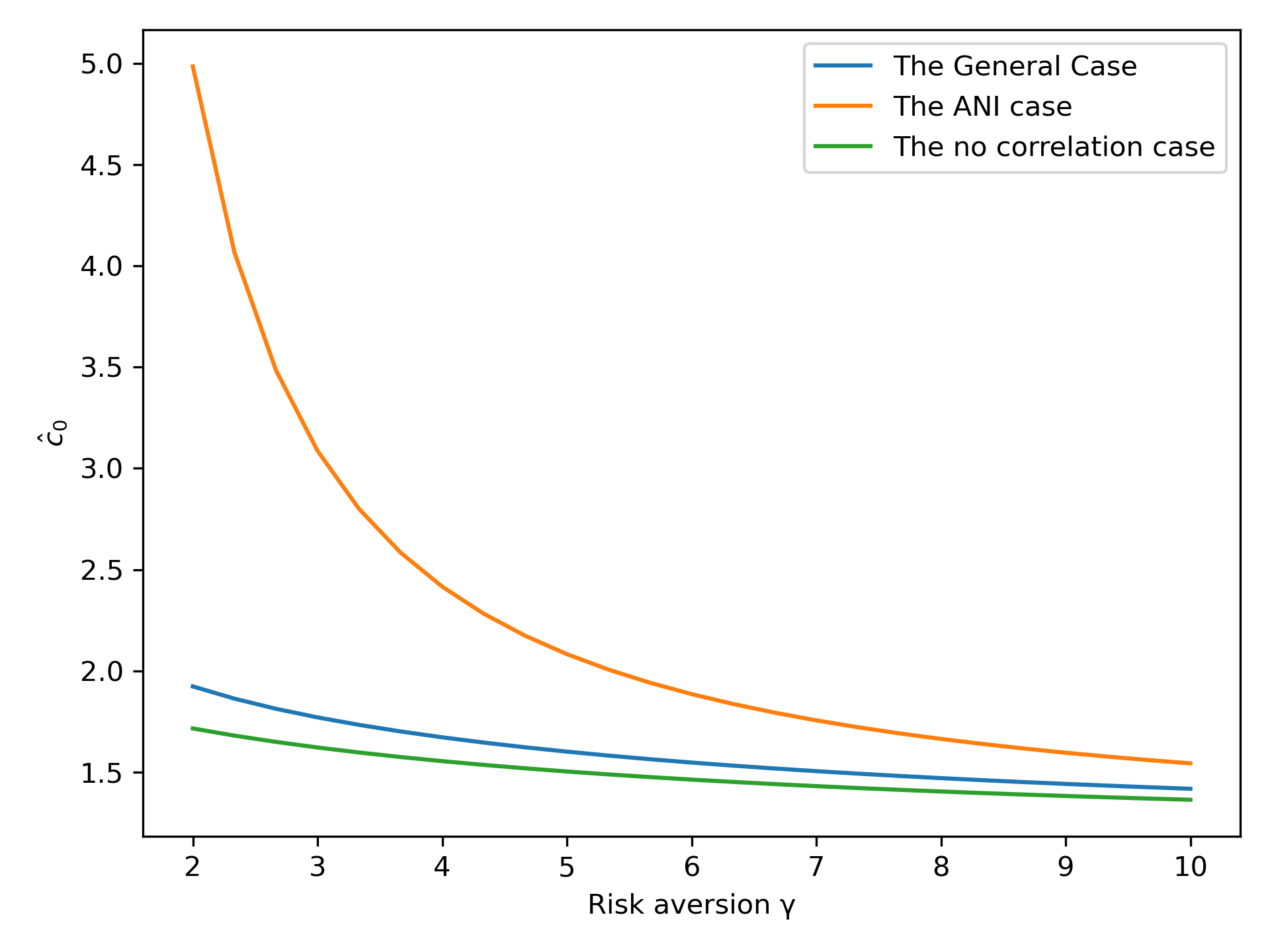}
		\label{fig:All consumption vs risk aversion}
	}
	\caption{The time-$0$ optimal consumption for an ambiguity-neutral insurer (ANI case) and an ambiguity-averse insurer when considering correlation (General case) or no-correlation (No-correlation case) between financial and insurance risks.}
	\label{fig:All consumption strategies}
\end{figure}

Next, to better understand the effect of ambiguity aversion and correlation between financial and insurance risks on the optimal consumption strategy, we display in Figure \ref{fig:All consumption strategies} three cases, i.e., the ambiguity-neutral case ($\Phi=0$), the no-correlation between financial and insurance risks case ($\rho^{S}=0$), and the general case which is determined through the first equation in \eqref{Optimal strategy_AAI}.

In Figure \ref{fig:All consumption strategies} we display the effects of model uncertainty and correlation between financial and insurance risks on the optimal consumption strategy with respect to the discount rate/time preference (see Figure \ref{fig:All consumptions vs discount factor}) and the risk aversion (see Figure \ref{fig:All consumption vs risk aversion}). Figure \ref{fig:All consumptions vs discount factor} shows an inverted U-shape in all three cases which indicates a non-monotonic relationship between the patience level--measured by the discount rate $\delta$-- of insurers and their consumption. The ambiguity-neutral case dominates with highest consumption throughout all cases followed by the general case. In all cases, there is peak consumption with varying values at $\delta\approx 0.07$. Figure \ref{fig:All consumption vs risk aversion} shows a declining consumption with increasing risk aversion for all three curves. Our numerical results show a similar effect of the EIS coefficient on the consumption.
\begin{figure}[h!]
	\centering
	\subfigure[]{
		\includegraphics[width=0.45\textwidth]{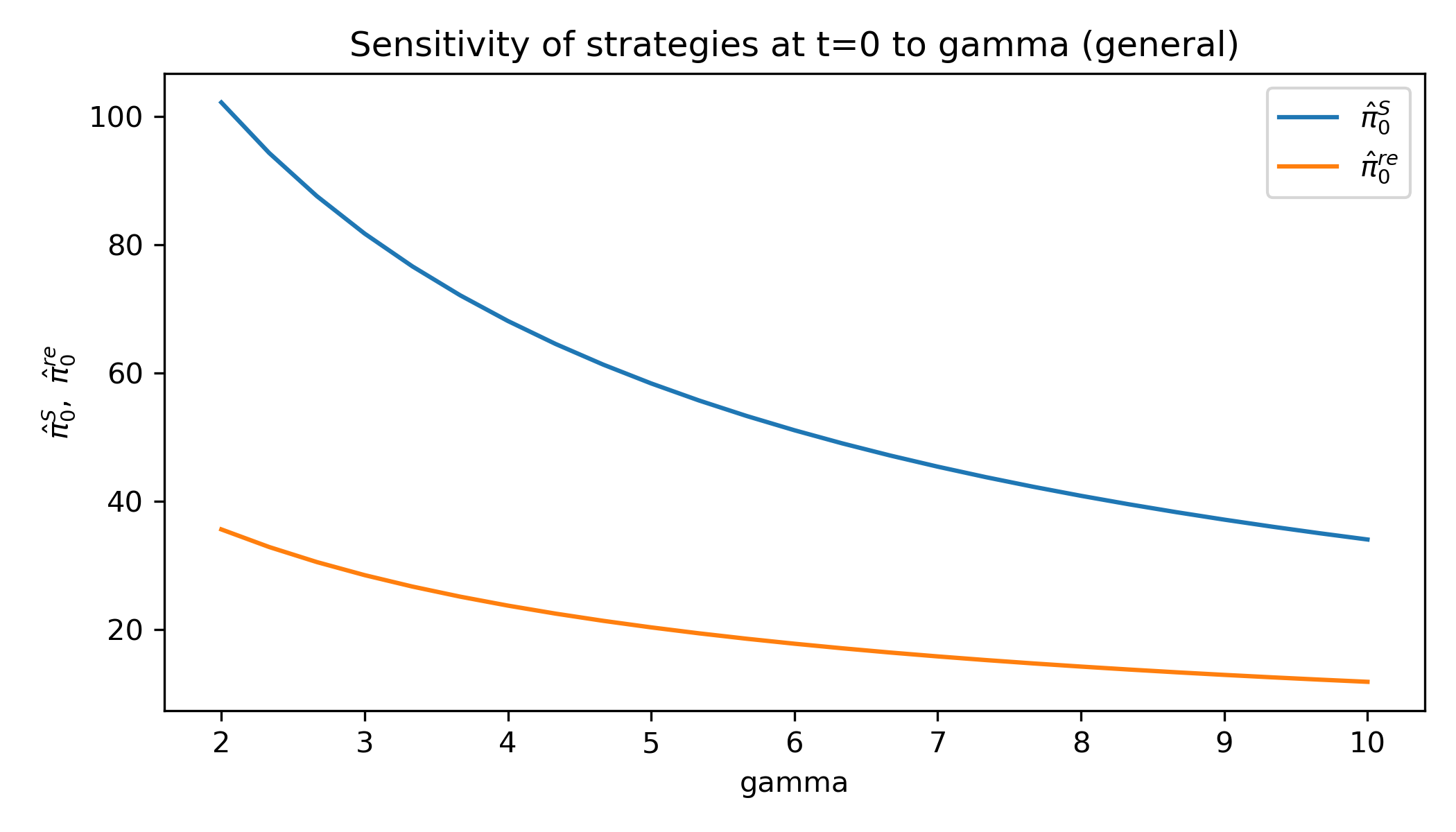}
		\label{fig:Investment-reinsurance vs gamma under uncertainty}
	}
	\subfigure[]{
		\includegraphics[width=0.45\textwidth]{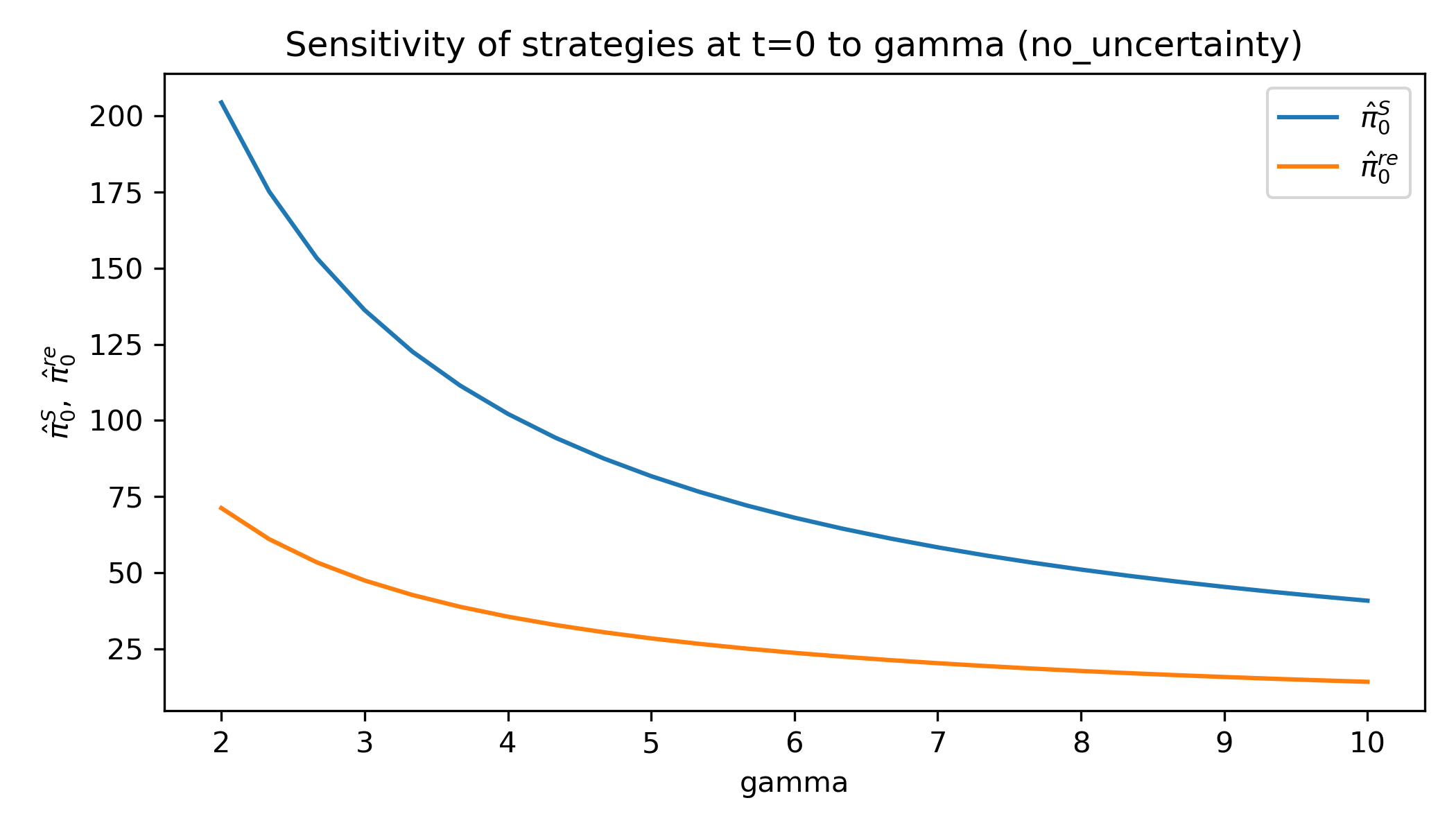}
		\label{fig:Investment-reinsurance vs gamma without correlation}
	}\vfill
	\subfigure[]{
		\includegraphics[width=0.45\textwidth]{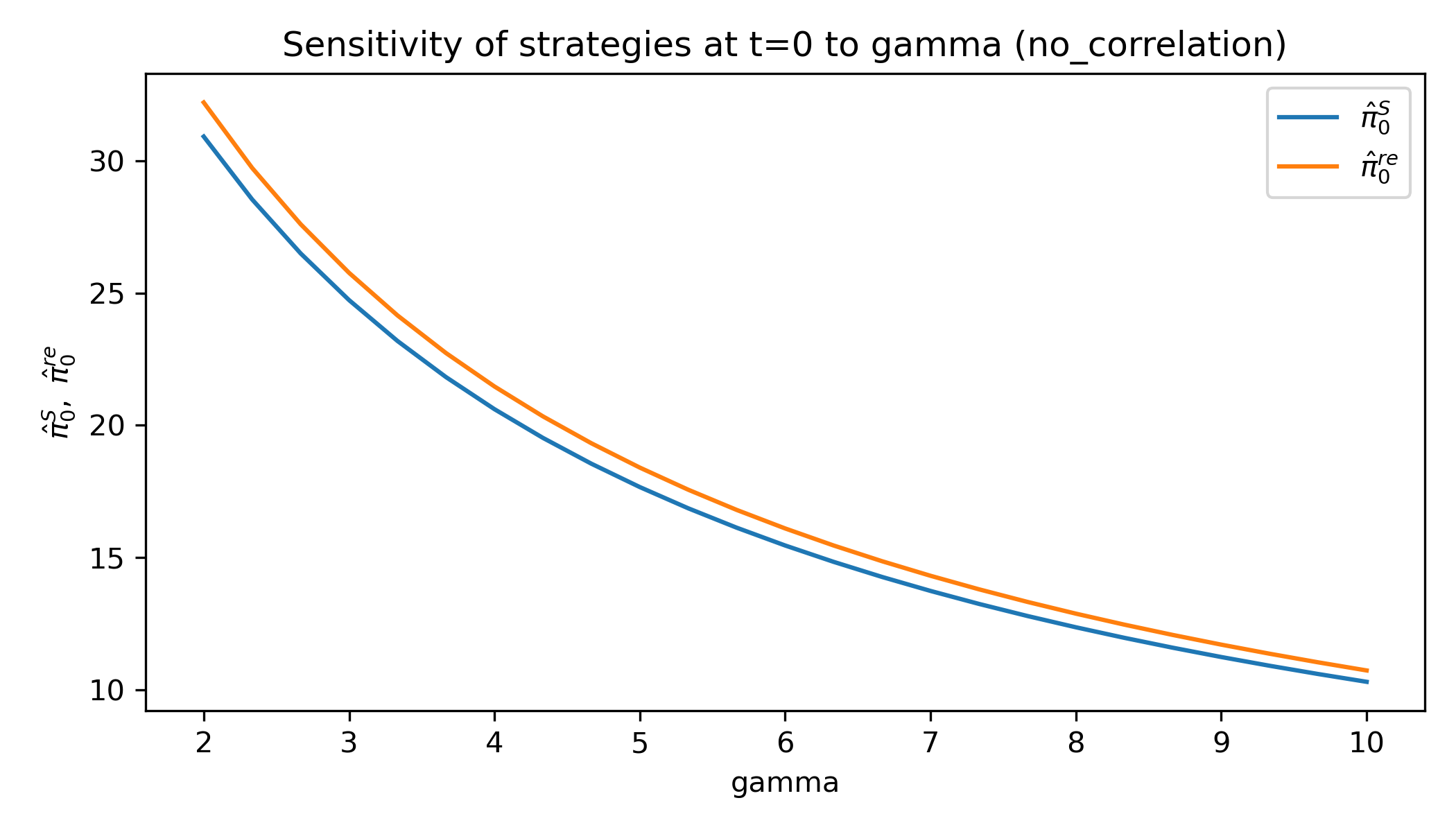}
		\label{fig:Investment-reinsurance vs gamma without uncertainty}
	}
	\caption{The time-$0$ optimal investment and reinsurance  with respect to the risk aversion for an ambiguity-neutral insurer (ANI case) and an ambiguity-averse insurer when considering correlation (General case) or no-correlation (No-correlation case) between financial and insurance risks.}
	\label{fig:Investment-reinsurance strategies}
\end{figure}

In Figure \ref{fig:Investment-reinsurance strategies} we display the time-$0$ robust optimal investment (ROIS) and reinsurance (RORS) strategies with respect to the risk aversion for all three cases: the ambiguity-neutral case (no uncertainty), the no-correlation between financial and insurance risks case, and the general case which is determined through the first equation in \eqref{Optimal strategy_AAI}. We observe that, except in the no-correlation case (see Figure \ref{fig:Investment-reinsurance vs gamma without uncertainty}), the ROIS always dominates the RORS (see Figures \ref{fig:Investment-reinsurance vs gamma under uncertainty} and \ref{fig:Investment-reinsurance vs gamma without correlation}). Our numerical results show similar behaviours of the ROIS and the RORS with respect to the EIS coefficient. In addition, all three graphs show a monotonic decline (with different magnitude) of ROIS and RORS as risk aversion increases. On contrary, our numerical results show that the EIS has little effect on the ROIS and the RORS.
\begin{figure}[h!]
	\centering
	\subfigure[]{
		\includegraphics[width=0.45\textwidth]{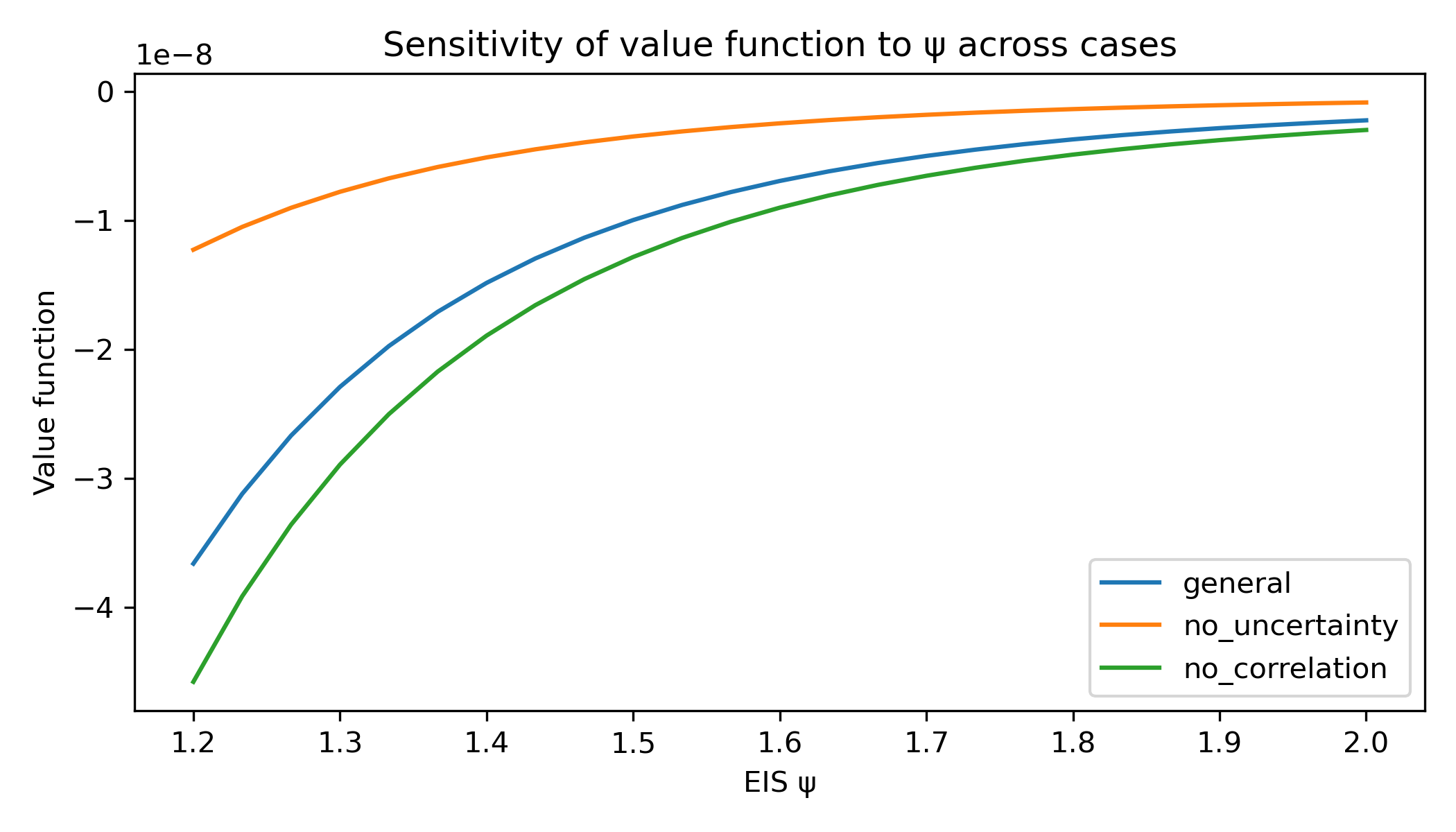}
		\label{fig:Value function vs EIS}
	}
	\subfigure[]{
		\includegraphics[width=0.45\textwidth]{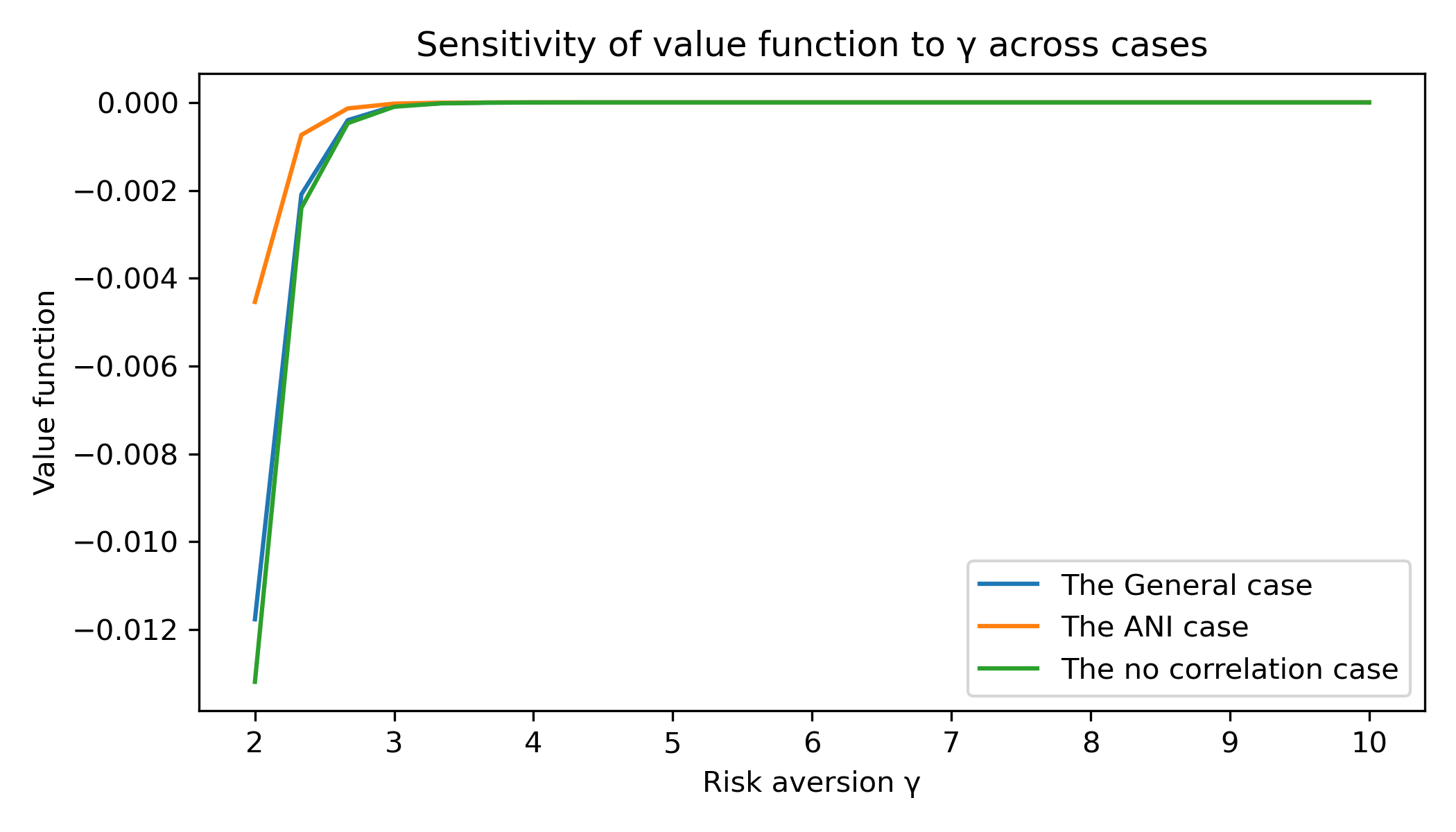}
		\label{fig:Value function vs gamma}
	}
	\vfill
	\subfigure[]{
		\includegraphics[width=0.45\textwidth]{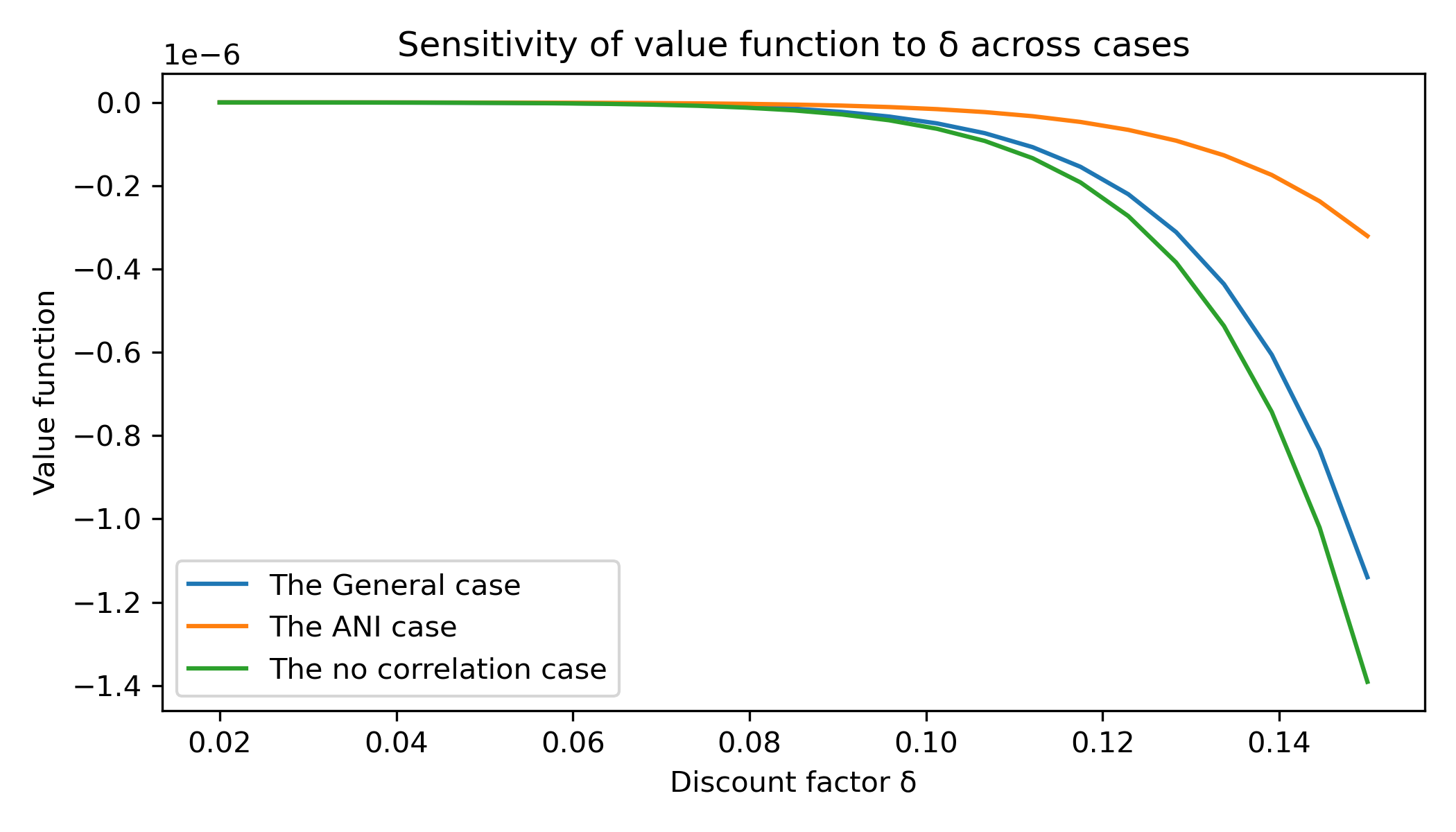}
		\label{fig:Value function vs delta}
	}
	\caption{The value function  for all cases.}
	\label{fig:Value functions}
\end{figure}

In Figure \ref{fig:Value functions} shows the sensitivity of the value function with respect to the EIS coefficient (see Figure \ref{fig:Value function vs EIS}), the risk aversion coefficient (see Figure \ref{fig:Value function vs gamma}) and the discount factor (see Figure \ref{fig:Value function vs delta}) for all the three cases mentioned in the previous paragraph.

\section{Conclusion}\label{Conclusion}
This paper addresses the complex problem of how an ambiguity-averse insurer should optimally manage consumption, investment, and reinsurance over a finite time horizon. The insurer's wealth dynamics incorporate a financial market (a risk-free bond and a risky asset) and an insurance surplus process based on the diffusion approximation of the classical Cramér-Lundberg model. A key challenge is that the insurer operates under model uncertainty (ambiguity) regarding the true probabilities of asset returns and insurance claims. Furthermore, the insurer's preferences are modeled using Epstein-Zin recursive utility, which allows for a separation between risk aversion and the elasticity of intertemporal substitution (EIS), a more realistic and flexible framework than traditional time-additive utilities.

To solve this robust control problem, a max-min optimisation problem is formulated, where the insurer maximises utility under the worst-case scenario from a set of plausible models, penalised by relative entropy. The solution is achieved by characterising the problem through a system of coupled forward-backward stochastic differential equations (FBSDEs). Using the martingale optimality principle, a closed-form analytical expressions for the optimal consumption is derived, investment, reinsurance, and the corresponding worst-case distortion process studied.

Through simulation, the results observed yield several important insights. The explicit formulas show that the optimal reinsurance strategy depends on financial market parameters, and the investment strategy depends on insurance market parameters, demonstrating an intrinsic co-dependence even when the two markets are uncorrelated. Numerical analyses confirm that optimal consumption decreases with higher risk aversion and EIS, while both investment and reinsurance strategies monotonically decline as risk aversion increases. The study successfully integrates robustness, recursive preferences, and liability management into a unified framework, providing actionable strategies for insurers navigating deep uncertainty.

\appendix
\section{Proof of Proposition~\ref{Non-empty control set}}\label{Non-empty control set_Proof}
\begin{proof}
	We construct $V^{c,\xi}$, given by \eqref{Epstein-Zin utility_Maenhout's style}, via the BSDE
	\begin{align}\label{Epstein-Zin utility_Maenhout's style_Integral form0}
		V_{t}^{c,\xi}&=h(c_{T})+\int_{t}^{T}\Big(f(c_{s},V_{s}^{c,\xi})+\frac{1}{2\Phi}\|\xi_{s}\|^{2}(1-\gamma)V_{s}^{c,\xi}\Big)\mathrm{d}s-\int_{t}^{T}Z_{t}^{c,\xi}\mathrm{d}B_{s}^{\mathbb{Q}^{\xi}}.
	\end{align}
	Recall the definition of $f$ in \eqref{Epstein-Zin generator} with $\gamma,\psi>1$ (that is, $\theta<0$). Then the generator of the BSDE \eqref{Epstein-Zin utility_Maenhout's style_Integral form0} is not Lipschitz. We obtain the unique solution of \eqref{Epstein-Zin utility_Maenhout's style_Integral form0} in a suitable space via the transformation
	\begin{align*}
		\big(Y_{t},Z_{t}\big):=e^{\int_{0}^{t}\frac{1}{2\Phi}\|\xi_{s}\|^{2}\mathrm{d}s}(1-\gamma)\big(V_{t}^{c,\xi},Z_{t}^{c,\xi}\big),~t\in[0,T],
	\end{align*}
	so that Equation \eqref{Epstein-Zin utility_Maenhout's style_Integral form0} becomes
	\begin{align}\label{Epstein-Zin utility_Maenhout's style_Integral form_Transformed}
		Y_{t}&=e^{-\delta\theta T}\big(e^{\int_{0}^{T}\frac{1}{2\Phi(1-\gamma)} \|\xi_{s}\|^{2}\mathrm{d}s}c_{T}\big)^{1-\gamma}\notag\\
		&\phantom{X}+\int_{t}^{T}\delta\theta e^{-\delta s}\big(e^{\int_{0}^{s}\frac{1}{2\Phi(1-\gamma)}\|\xi_{u}\|^{2} \mathrm{d}u}c_{s}\big)^{1-\frac{1}{\psi}}Y_{s}^{1-\frac{1}{\theta}}\mathrm{d}s-\int_{t}^{T}Z_{t}\mathrm{d}B_{s}^{\mathbb{Q}^{\xi}}.
	\end{align}
	This is precisely the type of BSDE considered in \cite[Prop.~2.2]{xing2017consumption} with $c_{s}$ replaced by $e^{\int_{0}^{s}\frac{1}{2\Phi(1-\gamma)}\|\xi_{u}\|^{2}\mathrm{d}u}c_{s}$ for $0\le s\le T$. Hence, by the proof of \cite[Prop.~2.2]{xing2017consumption}, the unique solution $(Y,Z)$ of the BSDE \eqref{Epstein-Zin utility_Maenhout's style_Integral form_Transformed} is such that $Y$ is continuous, strictly positive and belongs to the class (D), and $\int_{0}^{T}\|Z_{s}\|^{2}\mathrm{d}s<\infty$ $\mathbb{Q}^{\xi}$-a.s. Using the fact that $V_{t}^{c,\xi}=\frac{1}{1-\gamma}e^{-\int_{0}^{t}\frac{1}{2\Phi}\|\xi_{s}\|^{2}\mathrm{d}s}Y_{t}$ for $t\in[0,T]$, with $t\mapsto\frac{1}{1-\gamma}e^{-\int_{0}^{t}\frac{1}{2\Phi}\|\xi_{s}\|^{2}\mathrm{d}s}$ bounded almost surely, we deduce that the process $V^{c,\xi}$ is continuous, strictly negative and of class (D). Moreover, using the fact that $\Phi\ge0$, we have
	\begin{align*}
		\int_{0}^{T}\|Z_{s}^{c,\xi}\|^{2}\mathrm{d}s&=\frac{1}{(1-\gamma)^{2}}\int_{0}^{T}e^{-\int_{0}^{s}\frac{1}{\Phi}\|\xi_{u}\|^{2}\mathrm{d}u}\|Z_{s}\|^{2}\mathrm{d}s\\
		&<\frac{1}{(1-\gamma)^{2}}\int_{0}^{T}\|Z_{s}\|^{2}\mathrm{d}s<\infty.
	\end{align*}
	Hence, $\int_{0}^{T}\|Z_{s}^{c,\xi}\|^{2}\mathrm{d}s<\infty$ $\mathbb{Q}^{\xi}$-a.s.
	That concludes the proof.
\end{proof}

\section{Proof of Proposition~\ref{Existence result on the FBSDE}}\label{Solution to the FBSDE_Proof}
\begin{proof}
	We show that the triple $(\widehat{X},Y,Z)$ given by \eqref{Solution to the FBSDE_AAI} satisfies the FBSDE \eqref{Optimal FBSDE}. Let $\bar{x}$ denotes the constant defined by $\bar{x}:=\widetilde{x}\big(-r_{m}-\delta^{\psi}+\frac{\Phi}{(\gamma+\Phi)^{2}}\|\eta\|^{2}\big)$. Applying It{\^o}'s formula to $Y$ we obtain
	\begin{align}\label{E1_Appendix B}
		\mathrm{d}Y_{t}&=-\bar{x}e^{r_{m}t}\exp\Big(\frac{-1+2(\gamma+\Phi)}{2(\gamma+\Phi)^{2}}\|\eta\|^{2}t+\frac{1}{\gamma+\Phi}\eta^{\intercal}B_{t}\Big)\mathrm{d}t\notag\\
		&+\bar{x}\frac{e^{r_{m}T}-e^{r_{m}t}}{r_{m}}\big(\frac{1}{\gamma+\Phi}\|\eta\|^{2}\mathrm{d}t+\frac{1}{\gamma+\Phi}\eta^{\intercal}\mathrm{d}B_{t}\big)\exp\Big(\frac{-1+2(\gamma+\Phi)}{2(\gamma+\Phi)^{2}}\|\eta\|^{2}t+\frac{1}{\gamma+\Phi}\eta^{\intercal}B_{t}\Big)\notag\\
		&=\Big(-\bar{x}e^{r_{m}t}+\bar{x}\frac{e^{r_{m}T}-e^{r_{m}t}}{r_{m}}\frac{1}{\gamma+\Phi}\|\eta\|^{2}\Big)\exp\Big(\frac{-1+2(\gamma+\Phi)}{2(\gamma+\Phi)^{2}}\|\eta\|^{2}t+\frac{1}{\gamma+\Phi}\eta^{\intercal}B_{t}\Big)\mathrm{d}t\notag\\
		&+\bar{x}\frac{e^{r_{m}T}-e^{r_{m}t}}{r_{m}}\exp\Big(\frac{-1+2(\gamma+\Phi)}{2(\gamma+\Phi)^{2}}\|\eta\|^{2}t+\frac{1}{\gamma+\Phi}\eta^{\intercal}B_{t}\Big)\frac{1}{\gamma+\Phi}\eta^{\intercal}\mathrm{d}B_{t}.
	\end{align}
	Using the definition of $Y$ in \eqref{Solution to the FBSDE_AAI} we deduce that
	\begin{align*}
		&\bar{x}\frac{e^{r_{m}T}-e^{r_{m}t}}{r_{m}}\exp\Big(\frac{-1+2(\gamma+\Phi)}{2(\gamma+\Phi)^{2}}\|\eta\|^{2}t+\frac{1}{\gamma+\Phi}\eta^{\intercal}B_{t}\Big)\frac{1}{\gamma+\Phi}\eta\\
		&=\frac{1}{\gamma+\Phi}\big(Y_{t}+e^{-rT}G\big)\eta.
	\end{align*}
	Let $Z_{t}=\frac{1}{\gamma+\Phi}\big(Y_{t}+e^{-rT}G\big)\eta$ for $t\in[0,T]$. Then the generator of the BSDE \eqref{E1_Appendix B} becomes
	\begin{align*}
		Z_{t}^{\intercal}\eta-\bar{x}e^{r_{m}t}\exp\Big(\frac{-1+2(\gamma+\Phi)}{2(\gamma+\Phi)^{2}}\|\eta\|^{2}t+\frac{1}{\gamma+\Phi}\eta^{\intercal}B_{t}\Big).
	\end{align*}
	Hence, using the definition of $\widetilde{x},\widehat{\xi}$ and $\widehat{X}_{t}+e^{rt}Y_{t},~0\le t\le T$, we have
	\begin{align*}
		\mathrm{d}Y_{t}&=-\Big(e^{-rt}\big(\frac{\delta^{\psi}}{\psi-1}+r+\frac{1}{2}\frac{1}{\gamma+\Phi}\|\eta\|^{2}{-\frac{\delta\theta}{1-\gamma}}\big)\big(\widehat{X}_{t}+e^{rt}Y_{t}\big)-Z_{t}^{\intercal}\eta\Big)\mathrm{d}t+Z_{t}^{\intercal}\mathrm{d}B_{t}\\
		&=-\Big(e^{-rt}\big(\frac{\delta^{\psi}}{\psi-1}+r+\frac{1}{2}\frac{1}{\gamma+\Phi}\|\eta\|^{2}{-\frac{\delta\theta}{1-\gamma}}\big)\big(\widehat{X}_{t}+e^{rt}Y_{t}\big)-Z_{t}^{\intercal}\eta\Big)\mathrm{d}t\\
		&\phantom{X}+Z_{t}^{\intercal}\big(\mathrm{d}B_{t}^{\mathbb{Q}^{\widehat{\xi}}}-\frac{\Phi}{\gamma+\Phi}\eta\mathrm{d}t\big)\\
		&=-\Big(e^{-rt}\big(\frac{\delta^{\psi}}{\psi-1}+r+\frac{1}{2}\frac{1}{\gamma+\Phi}\|\eta\|^{2}{-\frac{\delta\theta}{1-\gamma}}\big)\big(\widehat{X}_{t}+e^{rt}Y_{t}\big)-\frac{\gamma}{\gamma+\Phi}Z_{t}^{\intercal}\eta\Big)\mathrm{d}t\\
		&\phantom{X}+Z_{t}^{\intercal}\mathrm{d}B_{t}^{\mathbb{Q}^{\widehat{\xi}}}.
	\end{align*}
	Similar arguments applied to $\widehat{X}$ give
	\begin{align*}
		\mathrm{d}\widehat{X}_{t}&=\Big(r\widehat{X}_{t}+\big(-\delta^{\psi}{+\frac{1}{\gamma+\Phi}}\|\eta\|^{2}\big)(\widehat{X}_{t}+e^{rt}Y_{t})-{\frac{\gamma}{\gamma+\Phi}}e^{rt}Z_{t}^{\intercal}\eta\Big)\mathrm{d}t\\
		&\phantom{X}+\Big({\frac{1}{\gamma+\Phi}}(\widehat{X}_{t}+e^{rt}Y_{t})\eta^{\intercal}-e^{rt}Z_{t}^{\intercal}\Big)\mathrm{d}B_{t}^{\mathbb{Q}^{\widehat{\xi}}}.
	\end{align*}
	
	Local uniqueness follows from lemma $2.1$ in \cite{xie2020exploration}. That concludes the proof.
\end{proof}

\section{Proof of Lemmas \ref{Admissiblity of the candidate controls}, \ref{Comparaison theorem} and \ref{Upper bound_Value function}, and Theorem \ref{Mainresult_AAI}}\label{Mainresult_AAI_Proof}
\begin{proof}[Proof of Lemma \ref{Admissiblity of the candidate controls}]
	The proof is split in three steps.
	
	\textit{Step 1:} (The positivity of $\widehat{X}_{t}+e^{rt}Y_{t}>0$ for $t\in[0,T]$). Since $\widehat{X}_{t}+e^{rt}Y_{t}=\widetilde{x}\varphi_{t}$, the proof follows directly from the first and third conditions in Assumption \ref{Well-definedness of the reinsurance strategy}, and the positivity of $\varphi$ defined in \eqref{Auxiliary process_AAI}.
	
	\textit{Step 2:} (The class (D) property of positivity of $(\widehat{X}+e^{rt}Y)^{1-\gamma}$). We have
	\begin{align}\label{Class (D) property}
		&(\widehat{X}_{t}+e^{rt}Y_{t})^{1-\gamma}\notag\\
		&=\widetilde{x}^{1-\gamma}\exp\Big(\Big(-\delta^{\psi}\theta+{\frac{(1-\gamma)(\gamma+3\Phi-1)}{2(\gamma+\Phi)^{2}}}\|\eta\|^{2}+\delta\theta\Big)t+\frac{1-\gamma}{\gamma+\Phi}\eta^{\intercal}B_{t}\Big)\notag\\
		&=\widetilde{x}^{1-\gamma}\exp\Big(\Big(-\delta^{\psi}\theta+{\frac{3\Phi(1-\gamma)}{2(\gamma+\Phi)^{2}}}\|\eta\|^{2}+\delta\theta\Big)t\Big)\mathcal{E}\big(\int_{}^{}\frac{1-\gamma}{\gamma+\Phi}\eta^{\intercal}\mathrm{d}B\big)_{t},
	\end{align}
	where $\mathcal{E}\big(\int_{}^{}\beta_{s}\mathrm{d}B_{s}\big)_{t}:=\exp\big(-\frac{1}{2}\int_{0}^{t}\|\beta_{s}\|^{2}\mathrm{d}s+\int_{0}^{t}\beta_{s}\mathrm{d}B_{s}\big)$ is the Dol{\'e}ans-Dade exponential at time $t$. Observe that the process $\mathcal{E}\big(\int_{}^{}\frac{1-\gamma}{\gamma+\Phi}\eta^{\intercal}\mathrm{d}B\big)$ is a $\mathbb{P}$-martingale (hence of class (D)); because $\frac{1-\gamma}{\gamma+\Phi}\eta^{\intercal}\in\mathbb{R}^{2}$. Hence the right-side of \eqref{Class (D) property} is of class (D) as a product of a bounded deterministic function (because the constant $\widetilde{x}$ is positive and finite) and a process of class (D). Thus, $(\widehat{X}+e^{rt}Y)^{1-\gamma}$ is of class (D).
	
	\textit{Step 3:} (Confirm that $(\widehat{c},\widehat{\xi})\in\mathcal{A}_{a}$). Recall from \eqref{Candidate distortion process} and \eqref{Candidate consumption_AAI} that $\widehat{\xi}_{t}=\frac{\Phi}{\gamma+\Phi}\eta$ (meaning, $\widehat{\xi}$ is a constant) and $\widehat{c}_{t}=\delta^{\psi}(\widehat{X}_{t}+e^{rt}Y_{t})$ for $t\in[0,T]$. Then, using the definition of $\varphi$ in \eqref{Auxiliary process_AAI}, Girsanov theorem and the facts that $c_{T}=\widehat{X}_{T}+e^{rT}Y_{T}$ (see Definition \ref{Admissible strategies}) and $\widehat{\xi}$ is a constant,  we obtain
	\begin{align*}
		&\mathbb{E}^{\mathbb{Q}^{\widehat{\xi}}}\Big[e^{\int_{0}^{T}\frac{1}{2\Phi} \|\widehat{\xi}_{s}\|^{2}\mathrm{d}s}c_{T}^{1-\gamma}\Big]\\
		&=\mathbb{E}\Big[\delta^{\psi(1-\gamma)}e^{\frac{\Phi}{2(\gamma+\Phi)^{2}}\|\eta\|^{2}T}\mathcal{E}\Big(\int_{}^{}-\frac{\Phi}{\gamma+\Phi}\eta^{\intercal}\mathrm{d}B\Big)_{T}\notag\\
		&\phantom{XX}\times\exp\Big(\Big(-\delta^{\psi}\theta+{\frac{(1-\gamma)(\gamma+3\Phi-1)}{2(\gamma+\Phi)^{2}}}\|\eta\|^{2}+\delta\theta\Big)T+\frac{1-\gamma}{\gamma+\Phi}\eta^{\intercal}B_{T}\Big)\Big]\notag\\
		&=\mathbb{E}\Big[\delta^{\psi(1-\gamma)}e^{\frac{\Phi}{2(\gamma+\Phi)^{2}}\|\eta\|^{2}T}\exp\Big(\Big(-\delta^{\psi}\theta+{\frac{\Phi(1-\gamma)}{2(\gamma+\Phi)^{2}}}\|\eta\|^{2}+\delta\theta\Big)T\Big)\notag\\
		&\phantom{XX}\times\mathcal{E}\Big(\int_{}^{}\frac{1-\gamma-\Phi}{\gamma+\Phi}\eta^{\intercal}\mathrm{d}B\Big)_{T}\Big]\notag\\
		&=\delta^{\psi(1-\gamma)}e^{\frac{\Phi}{2(\gamma+\Phi)^{2}}\|\eta\|^{2}T}\exp\Big(\Big(-\delta^{\psi}\theta+{\frac{\Phi(1-\gamma)}{2(\gamma+\Phi)^{2}}}\|\eta\|^{2}+\delta\theta\Big)T\Big)\notag\\
		&<\infty,
	\end{align*}
	where the third equality holds due to $\mathcal{E}\Big(\int_{}^{}\frac{1-\gamma-\Phi}{\gamma+\Phi}\eta^{\intercal}\mathrm{d}B\Big)$ being a $\mathbb{P}$-martingale. Besides, we have
	\begin{align*}
		&\mathbb{E}^{\mathbb{Q}^{\xi}}\Big[\int_{0}^{T}e^{-\delta s}c_{s}^{1-\frac{1}{\psi}}\mathrm{d}s\Big]\notag\\
		&=\delta^{\psi-1}\widetilde{x}^{1-\frac{1}{\psi}}\mathbb{E}\Big[\mathcal{E}\Big(\int_{}^{}-\frac{\Phi}{\gamma+\Phi}\eta^{\intercal}\mathrm{d}B\Big)_{T}\notag\\
		&\times\int_{0}^{T}e^{-\delta s}\exp\Big(\Big(-\delta^{\psi}+\frac{\psi}{\psi-1}\frac{\gamma+3\Phi-1}{2(\gamma+\Phi)^{2}}\|\eta\|^{2}+\delta\Big)s+\frac{\psi}{\psi-1}\frac{1}{\gamma+\Phi}\eta^{\intercal}B_{s}\Big)\mathrm{d}s\Big]\notag\\
		&\le\delta^{\psi-1}\widetilde{x}^{1-\frac{1}{\psi}}\Big(\mathbb{E}\Big[\mathcal{E}\Big(\int_{}^{}-\frac{\Phi}{\gamma+\Phi}\eta^{\intercal}\mathrm{d}B\Big)_{T}^{2}\Big]\Big)^{\frac{1}{2}}\notag\\
		&\times\Big(\mathbb{E}\Big[\Big(\int_{0}^{T}\exp\Big(\big(-\delta^{\psi}+\frac{\psi}{\psi-1}\frac{\gamma+3\Phi-1}{2(\gamma+\Phi)^{2}}\|\eta\|^{2}\big)s+\frac{\psi}{\psi-1}\frac{1}{\gamma+\Phi}\eta^{\intercal}B_{s}\Big)\mathrm{d}s\Big)^{2}\Big]\Big)^{\frac{1}{2}}\notag\\
		&\le\delta^{\psi-1}\widetilde{x}^{1-\frac{1}{\psi}}\Big(\mathbb{E}\Big[\exp\Big(-\Big(\frac{\Phi}{\gamma+\Phi}\Big)^{2}\|\eta\|^{2}T-\frac{2\Phi}{\gamma+\Phi}\eta^{\intercal}B_{T}\Big)\Big]\Big)^{\frac{1}{2}}\notag\\
		&\times\Big(T^{2}\mathbb{E}\Big[\int_{0}^{T}\exp\big(\Big(-2\delta^{\psi}+\frac{\psi}{\psi-1}\frac{\gamma+3\Phi-1}{(\gamma+\Phi)^{2}}\|\eta\|^{2}\Big)s+\frac{2\psi}{\psi-1}\frac{1}{\gamma+\Phi}\eta^{\intercal}B_{s}\big)\mathrm{d}s\Big]\Big)^{\frac{1}{2}}\notag\\
		&=\delta^{\psi-1}\widetilde{x}^{1-\frac{1}{\psi}}\exp\Big(\frac{\Phi^{2}}{2(\gamma+\Phi)^{2}}\|\eta\|^{2}T\Big)\Big(\mathbb{E}\Big[\mathcal{E}\Big(\int_{}^{}-\frac{2\Phi}{\gamma+\Phi}\eta^{\intercal}\mathrm{d}B\Big)_{T}\Big]\Big)^{\frac{1}{2}}\notag\\
		&\times\Big(\mathbb{E}\Big[\int_{0}^{T}\exp\Big(\Big(-2\delta^{\psi}+\frac{\psi}{(\psi-1)^{2}}\frac{(\psi-1)(\gamma+3\Phi-1)+2\psi}{(\gamma+\Phi)^{2}}\|\eta\|^{2}+\delta\Big)s\Big)\notag\\
		&\phantom{XXXXX}\times\mathcal{E}\Big(\int_{}^{}\frac{2\psi}{\psi-1}\frac{1}{\gamma+\Phi}\eta^{\intercal}\mathrm{d}B\Big)_{s}\mathrm{d}s\Big]\Big)^{\frac{1}{2}}\notag\\
		&=\delta^{\psi-1}\widetilde{x}^{1-\frac{1}{\psi}}\exp\Big(\frac{\Phi^{2}}{2(\gamma+\Phi)^{2}}\|\eta\|^{2}T\Big)\notag\\
		&\times\int_{0}^{T}\exp\Big(\Big(-2\delta^{\psi}+\frac{\psi}{(\psi-1)^{2}}\frac{(\psi-1)(\gamma+3\Phi-1)+2\psi}{(\gamma+\Phi)^{2}}\|\eta\|^{2}+\delta\Big)s\Big)\mathrm{d}s\notag\\
		&<\infty,
	\end{align*}
	where the first inequality follows from Cauchy-Schwarz's inequality, the second comes from Jensen's inequality, and the third equality holds due to the fact that $\mathcal{E}\Big(\int_{}^{}-\frac{2\Phi}{\gamma+\Phi}\eta^{\intercal}\mathrm{d}B\Big)$ and $\mathcal{E}\Big(\int_{}^{}\frac{2\psi}{\psi-1}\frac{1}{\gamma+\Phi}\eta^{\intercal}\mathrm{d}B\Big)$ are $\mathbb{P}$-martingales.
\end{proof}
\begin{proof}[Proof of Lemma \ref{Comparaison theorem}]
	The proof follows similar arguments as in the third step in the proof of proposition $2.2$ in \cite{xing2017consumption} with the generators $F(s,c_{s},Y_{s})$ and $F(s,c_{s},\tilde{Y}_{s})$ replaced by $f(c_{s},Y_{s})+\frac{1}{2\Phi}\|\xi_{s}\|^{2}(1-\gamma)Y_{s}$ and $f(c_{s},\tilde{Y}_{s})+\frac{1}{2\Phi}\|\xi_{s}\|^{2}(1-\gamma)\tilde{Y}_{s}$, respectively, for all $s\in[0,T]$.
\end{proof}
\begin{proof}[Proof of Lemma \ref{Upper bound_Value function}]
	For a triple $(c,\pi,\xi)$ of admissible consumption, investment-reinsurance and distortion strategies (that is, $(c,\pi,\xi)\in\mathcal{A}^{AAI}$; see Definition~\ref{Admissible strategies}). Let $(M_{t})_{t\in[0,T]}$ be the process given in \eqref{Optimal martingale} by
	\begin{align*}
		M_{t}^{c,\pi,\xi}&:={e^{-\delta\theta t}}\frac{(X_{t}+e^{rt}Y_{t})^{1-\gamma}}{1-\gamma}\notag\\
		&\phantom{xx}+\int_{0}^{t}\Big(f\big(c_{s},e^{-\delta\theta s}\frac{(X_{s}+e^{rs}Y_{s})^{1-\gamma}}{1-\gamma}\big)+\frac{1}{2\Phi}\|\xi\|^{2}(X_{s}+e^{rs}Y_{s})^{1-\gamma}\Big)\mathrm{d}s.
	\end{align*}
	Using \eqref{OptEZ_Differential_AAI} and \eqref{YDrift_AAI} we deduce that $M$ is a local super-martingale. Moreover, using the Doob-Meyer decomposition and martingale representation, there exists an increasing process $A$  and a process $Z^{M}$ such that $M=-A+\int_{0}^{\cdot}Z_{s}^{M}\mathrm{d}B_{s}^{\mathbb{Q}^{\xi}}$. Hence, $(e^{-\delta\theta\cdot}\frac{(X+e^{\int_{0}^{}r_{s}\mathrm{d}s}Y)^{1-\gamma}}{1-\gamma},Z^{M})$ is a super-solution to \eqref{Epstein-Zin utility_Maenhout's style_Integral form} with integrable terminal condition $e^{-\delta\theta T}\frac{(X_{T}-G)^{1-\gamma}}{1-\gamma}$; see Lemma \ref{Comparaison theorem} for the notion of sub-/super- solutions of BSDEs. Now, consider the utility $V^{c,\xi}$ associated to the consumption stream $c$ and the terminal lump sum $X_{T}-G$; meaning that $V^{c,\xi}$ is the first part of the solution of the BSDE \eqref{Epstein-Zin utility_Maenhout's style_Integral form} with terminal value $h(X_{T}-G)$. Therefore, using Lemma \ref{Comparaison theorem} we confirm Equation \eqref{Upper bound_Value function1}.
\end{proof}
\begin{proof}[Proof of lemma \ref{Auxiliary martingale}]
	Consider the process $M^{c,\pi,\xi}$ defined by \eqref{OptEZ_Differential_AAI} for all $(c,\pi,\xi)\in\mathcal{A}^{AAI}$. For the consumption $\widehat{c}$, investment-reinsurance $\widehat{\pi}$ and distortion process $\widehat{\xi}$ (with associated function $\mathcal{H}$, given by \eqref{Auxiliary BSDE generator_AAI}, of the BSDE \eqref{Auxiliary BSDE_AAI}) given by \eqref{Candidate consumption_AAI}, \eqref{Candidate investment_AAI}, \eqref{Candidate distortion process} and , respectively, one can show that
	\begin{align}\label{Auxiliary martingale process_SDE1}
		\mathrm{d}M_{t}^{\widehat{c},\widehat{\pi},\widehat{\xi}}&=e^{-\delta\theta t}\frac{(\widehat{X}_{t}^{F}+e^{\int_{0}^{t}r_{s}\mathrm{d}s}Y_{t}^{F})^{1-\gamma}}{1-\gamma}\frac{1-\gamma}{\gamma+\Phi}\eta^{\intercal}\mathrm{d}B_{t}^{\mathbb{Q}^{\xi}},~0\le t\le T.
	\end{align}
	On the other hand, using successively \eqref{Optimal martingale} and \eqref{Epstein-Zin generator} we have
	\begin{align}\label{Auxiliary martingale process_SDE2}
		\mathrm{d}M_{t}^{\widehat{c},\widehat{\pi},\widehat{\xi}}&=\mathrm{d}\Big({e^{-\delta\theta t}}\frac{(\widehat{X}_{t}^{F}+e^{\int_{0}^{t}r_{s}\mathrm{d}s}Y_{t}^{F})^{1-\gamma}}{1-\gamma}\Big)\notag\\
		&\phantom{X}+\Big(f\big(\widehat{c}_{t},e^{-\delta\theta t}\frac{(\widehat{X}_{t}+e^{rt}Y_{t})^{1-\gamma}}{1-\gamma}\big)+\frac{\Phi}{2(\gamma+\Phi)^{2}}\|\eta\|^{2}(\widehat{X}_{t}+e^{rt}Y_{t})^{1-\gamma}\Big)\mathrm{d}t\nonumber\\
		&=\mathrm{d}\Big(e^{-\delta\theta t}\frac{(\widehat{X}_{t}^{F}+e^{\int_{0}^{t}r_{s}\mathrm{d}s}Y_{t}^{F})^{1-\gamma}}{1-\gamma}\Big)\notag\\
		&\phantom{X}+\Big(\delta^{\psi}\theta+\frac{\Phi(1-\gamma)}{2(\gamma+\Phi)^{2}}\|\eta\|^{2}e^{\delta\theta t}\Big)e^{-\delta\theta t} \frac{(\widehat{X}_{t}^{F}+e^{\int_{0}^{t}r_{s}\mathrm{d}s}Y_{t}^{F})^{1-\gamma}}{1-\gamma}\mathrm{d}t.
	\end{align}
	Hence, combining \eqref{Auxiliary martingale process_SDE1} and \eqref{Auxiliary martingale process_SDE2} we obtain
	\begin{align}\label{Auxiliary martingale process_SDE3}
		&\mathrm{d}\Big(e^{-\delta\theta t}\frac{(\widehat{X}_{t}^{F}+e^{\int_{0}^{t}r_{s}\mathrm{d}s}Y_{t}^{F})^{1-\gamma}}{1-\gamma}\Big)+\Big(\delta^{\psi}\theta+\frac{\Phi(1-\gamma)}{2(\gamma+\Phi)^{2}}\|\eta\|^{2}e^{\delta\theta t}\Big)e^{-\delta\theta t} \frac{(\widehat{X}_{t}^{F}+e^{\int_{0}^{t}r_{s}\mathrm{d}s}Y_{t}^{F})^{1-\gamma}}{1-\gamma}\mathrm{d}t\nonumber\\
		&={e^{-\delta\theta t}}\frac{(\widehat{X}_{t}^{F}+e^{\int_{0}^{t}r_{s}\mathrm{d}s}Y_{t}^{F})^{1-\gamma}}{1-\gamma}\frac{1-\gamma}{\gamma}\eta_{t}^{\intercal}\mathrm{d}B_{t}^{\mathbb{Q}^{\xi}}.
	\end{align}
	Multiplying both sides of \eqref{Auxiliary martingale process_SDE3} by $\exp\big(\int_{0}^{t}\big(\delta^{\psi}\theta+\frac{\Phi(1-\gamma)}{2(\gamma+\Phi)^{2}}\|\eta\|^{2}e^{\delta\theta s}\big)\mathrm{d}s\big),~0\le t\le T$, we have
	\begin{align}
		\widetilde{M}_{t}=\widetilde{M}_{0}\mathcal{E}\big(\int\frac{1-\gamma}{\gamma}\eta^{\intercal}\mathrm{d}B^{\mathbb{Q}^{\xi}}\big)_{t}~\text{ for }t\in[0,T],
	\end{align}
	where $\mathcal{E}(\int\beta^{\intercal}\mathrm{d}B^{\mathbb{Q}^{\xi}})_{t}:=\exp\left(-\frac{1}{2}\int_{0}^{t}\|\beta_{s}\|^{2}\mathrm{d}s+\int_{0}^{t}\beta_{s}^{\intercal}\mathrm{d}B_{s}^{\mathbb{Q}^{\xi}}\right)$.
\end{proof}
\begin{proof}[Proof of Theorem \ref{Mainresult_AAI}]
	Thanks to Lemma \ref{Admissiblity of the candidate controls} the uplet $(\widehat{c},\widehat{\pi}^{S},\widehat{\pi}^{re},\widehat{\xi})$ given by \eqref{Optimal strategy_AAI} is admissible in the sense of Definition~\ref{Admissible strategies} with $\widehat{\pi}=(\widehat{\pi}^{S},\widehat{\pi}^{re})\Sigma$. Next, we prove that $(\widehat{c},\widehat{\pi}^{S},\widehat{\pi}^{re},\widehat{\xi})$ is optimal. Let $\widetilde{M}$ be as in Lemma \ref{Auxiliary martingale}. Thanks to Lemma \ref{Auxiliary martingale}, there exists a square integrable process $\widetilde{Z}$ such that
	\begin{align}\label{Auxiliary martingale process_SDE4}
		\mathrm{d}\widetilde{M}_{t}&=\widetilde{Z}_{t}\mathrm{d}B_{t}^{\mathbb{Q}^{\widehat{\xi}}},~0\le t\le T.
	\end{align}
	Substituting \eqref{Auxiliary martingale process} into the left-side of \eqref{Auxiliary martingale process_SDE4} and applying It{\^o}'s formula we obtain
	\begin{align*}
		&\mathrm{d}\Big(e^{-\delta\theta t}\frac{(\widehat{X}_{t}^{F}+e^{\int_{0}^{t}r_{s}\mathrm{d}s}Y_{t}^{F})^{1-\gamma}}{1-\gamma}\Big)+\Big(\delta^{\psi}\theta+\frac{\Phi(1-\gamma)}{2(\gamma+\Phi)^{2}}\|\eta\|^{2}e^{\delta\theta t}\Big)e^{-\delta\theta t} \frac{(\widehat{X}_{t}^{F}+e^{\int_{0}^{t}r_{s}\mathrm{d}s}Y_{t}^{F})^{1-\gamma}}{1-\gamma}\mathrm{d}t\nonumber\\
		&=\exp\Big(-\int_{0}^{t}\big(\delta^{\psi}\theta+\frac{\Phi(1-\gamma)}{2(\gamma+\Phi)^{2}}\|\eta\|^{2}e^{\delta\theta s}\big)\mathrm{d}s\Big) \widetilde{Z}_{t}\mathrm{d}B_{t}^{\mathbb{Q}^{\widehat{\xi}}}.
	\end{align*}
	Hence, using the fact that
	\begin{align*}
		&f\big(\widehat{c}_{t},e^{-\delta\theta t}\frac{(\widehat{X}_{t}+e^{rt}Y_{t})^{1-\gamma}}{1-\gamma}\big)+\frac{\Phi}{2(\gamma+\Phi)^{2}}\|\eta\|^{2}(\widehat{X}_{t}+e^{rt}Y_{t})^{1-\gamma}\\
		&=\Big(\delta^{\psi}\theta+\frac{\Phi(1-\gamma)}{2(\gamma+\Phi)^{2}}\|\eta\|^{2}e^{\delta\theta t}\Big)e^{-\delta\theta t} \frac{(\widehat{X}_{t}^{F}+e^{\int_{0}^{t}r_{s}\mathrm{d}s}Y_{t}^{F})^{1-\gamma}}{1-\gamma}
	\end{align*}
	for $t\in[0,T]$, and $Y_{T}=-e^{-rT}G$ we have (recall the definition of $h$ just below \eqref{Epstein-Zin generator})
	\begin{align*}
		\frac{(\widehat{X}_{0}+Y_{0})^{1-\gamma}}{1-\gamma}&=\mathbb{E}\Big[h(\widehat{X}_{T}-G)+\int_{0}^{T}\Big(f\big(\widehat{c}_{t},e^{-\delta\theta t}\frac{(\widehat{X}_{t}+e^{rt}Y_{t})^{1-\gamma}}{1-\gamma}\big)\\
		&\phantom{XXXXXXXXXXXx}+\frac{\Phi}{2(\gamma+\Phi)^{2}}\|\eta\|^{2}(\widehat{X}_{t}+e^{rt}Y_{t})^{1-\gamma}\Big)\mathrm{d}s\Big].
	\end{align*}
	Hence the upper bound in Lemma~\ref{Upper bound_Value function} is attained by $(\widehat{c},\widehat{\pi}^{S},\widehat{\pi}^{re},\widehat{\xi})$. We conclude that $(\widehat{c},\widehat{\pi}^{S},\widehat{\pi}^{re},\widehat{\xi})$ is optimal.
\end{proof}

	\section*{Acknowledgments}
	We would like to acknowledge fruitful discussions with Prof. Olivier Menoukeu Pamen.

\end{document}